\newcommand{\Mod}{\ensuremath{\mathrm{Mod}}}
\def\BibTeX{{\rm B\kern-.05em{\sc i\kern-.025em b}\kern-.08em
    T\kern-.1667em\lower.7ex\hbox{E}\kern-.125emX}}
\tikzset{join/.code=\tikzset{after node path={\ifx\tikzchainprevious\pgfutil@empty\else(\tikzchainprevious)edge[every join]#1(\tikzchaincurrent)\fi}}}
\tikzset{>=stealth',every on chain/.append style={join},
         every join/.style={->}}
\tikzstyle{labeled}=[execute at begin node=$\scriptstyle,
\newtheorem{Def}{Definition}
\newtheorem{Thm}{Theorem}
\newtheorem{Rmk}[Thm]{Remark}
\newtheorem{Exm}[Thm]{Example}
\newtheorem{Cor}[Thm]{Corollary}
\newtheorem{Lem}[Thm]{Lemma}
\theoremstyle{plain}
\theoremstyle{definition}
\begin{document}

\title{Asymptotic truth-value laws in many-valued logics}

\author{Guillermo Badia}
\address{
School of Historical and Philosophical Inquiry\\
 University of Queensland\\ 
 St Lucia, QLD 4072, Brisbane, Australia\\ 
\texttt{guillebadia89@gmail.com} }
\author{Xavier Caicedo}
\address{Departamento de Matem\'aticas\\ Universidad de los Andes \\ Carrera 1 N. 18 A -70\\ Bogot\'a, Colombia\\ \texttt{xcaicedo@uniandes.edu.co}}
\author{Carles Noguera }
\address{Department of Information Engineering and Mathematics\\University of Siena,
          Via Roma~56, 53100~Siena, Italy\\
           \texttt{carles.noguera@unisi.it}}\date{}
\maketitle

\begin{abstract}
This paper studies which truth-values are most likely to be taken on finite models by arbitrary sentences of a many-valued predicate logic. We obtain generalizations of Fagin's classical zero-one law for any logic with values in a finite lattice-ordered algebra, and for some infinitely valued logics, including \L ukasiewicz logic. The finitely valued case is reduced to the classical one through a uniform translation and Oberschelp's generalization of Fagin's result. Moreover, it is shown that the complexity of determining the almost sure value of a given sentence is PSPACE-complete, and for some logics we may describe completely the set of truth-values that can be taken by sentences almost surely. 

\medskip

\noindent\textbf{Keywords:} many-valued logic, zero-one law, finite distributive lattices with negation, \L ukasiewicz predicate logic
\end{abstract}


\section{Introduction}

Given a property $P$ of finite structures, a natural question has been often considered (beginning with
~\cite{erdos} in the context of graph theory): what is the probability that a structure satisfies $P$ when randomly selected among finite structures with the same domain for a suitable probability measure? More interestingly, what are the asymptotic probabilities? Or, more precisely, what do these probabilities converge to (if anything) as the size of the domain of the structures grows to infinite? Sometimes, when the properties under consideration are expressible by formulas of a given logic, we may have a `zero-one law':  the probabilities converge to either $0$ or $1$ (and so we can say that the formula is either \emph{almost surely false} or \emph{almost surely true}). After early preliminary developments for monadic predicate logic~\cite{carnap}, the topic of logical zero-one laws was properly started independently in the papers by Glebskiĭ et al~\cite{gle} and Fagin~\cite{fag} where they establish a zero-one law for first-order classical logic on finite purely relational vocabularies. The argument in~\cite{gle} proceeds by a rather involved quantifier elimination, whereas the one in~\cite{fag} makes use of the so called `extension axioms' that form a complete theory axiomatizing a countable random structure (Lynch in~\cite{lynch} attributes the discovery of this theory to Stanis\l aw Ja\'skowski; both the theory and the random structure appeared first in print in~\cite{gai}, where Gaifman acknowledged that the latter was an example suggested to him by Michael Rabin and Dana Scott). An important related result was obtained by Etienne Grandjean in~\cite{grand} when he proved that the set of almost surely true formulas is PSPACE-complete. The reader may consult~\cite{gu} for a didactic introduction to zero-one laws and~\cite{com} for a rather comprehensive survey. 

Zero-one laws have been only sporadically investigated in the the setting of many-valued logics,  although they offer a number of intriguing issues that are invisible in the  two-valued context. For example, do we have general asymptotic truth-value laws for any many-valued logic? That is, in the presence of a multiplicity of truth-values, are some of the values taken by sentences almost surely? And exactly which of them? And, whenever we do have such a law, what is the complexity of determining the truth-value that an arbitrary formula almost surely takes?  Zero-one laws have indeed been obtained for some finitely valued fuzzy logics in~\cite{bano, kos}, but, to the best of our knowledge, zero-one laws for infinitely valued logics remain largely unexplored. In the related context of semiring semantics, analogous issues have been recently put forward and successfully addressed by Gr\"adel et al in~\cite{gra} for sentences given in negation normal form (involving only the additional connectives of disjunction and conjunction) and letting interpretations assign truth-values directly to the literals (both atomic and negated atomic formulas) of the logic.

In this paper, we consider asymptotic truth-value laws in many-valued logics with a general approach that allows for arbitrary probability measures and arbitrary lattice-based algebras, obtaining as main results:
\begin{itemize}
\item a general zero-one law for finitely valued predicate logics, and
\item a zero-law for infinitely valued \L ukasiewicz logic and other $[0,1]$-valued predicate logics.
\end{itemize}
Moreover, for finitely values logics, we provide examples in which all values are almost sure and examples in which only a few are, and we generalize Grandjean's theorem on the complexity of asymptotic truth-values.

The paper is organized as follows. We recall in \S \ref{s:0-1first} some necessary notions from classical finite model theory and its zero-one laws. In \S \ref{s:finitely} we present our general approach to these issues in the case of {\em finitely} valued logics: in \S \ref{ss:01law-finitely} we obtain the general zero-one law for finitely valued logics, in \S \ref{ss:finitely-random} we discuss random models (which are behind the proof of the previous result) and their axiomatizations, and in \S \ref{ss:complexity} we show that the complexity of the problem of determining the asymptotic truth-value of an arbitrary formula and is in PSPACE and, actually, it is PSPACE-complete in the presence of crisp identity, hence generalizing Grandjean's result for classical predicate logic. In \S \ref{s:SetAlmostSure} we provide natural examples of logics in which {\em all} truth-values are actually almost sure, and a rather wide class of logics in which the almost sure values are few. In \S \ref{s:01-law-Luk} we prove the zero-one law for {\em infinitely} valued \L ukasiewicz predicate logic. In \S \ref{s:Further-infinitely} we consider again the problem of describing sets of almost sure values, now for {\em infinitely} valued logics, and we prove a zero-one law for a large family of $[0,1]$-valued logics satisfying De Morgan laws. In particular, we obtain that all rational numbers in $[0,1]$ are almost sure values of infinitely valued \L ukasiewicz predicate logic. We end the paper in \S \ref{s:Conclusion} with some concluding remarks and open questions.

For readers less familiar with the literature of many-valued logic, a modern comprehensive reference in the general field of mathematical fuzzy logic is~\cite{Cintula-FHN:HBMFL}, while \cite{haj} remains even today a rather useful source that contains detailed proofs of many essential results; we recommend~\cite{mun2} as a nice article highlighting the importance of infinitely valued \L ukasiewicz logic among other non-classical logics.

\section{Zero-one laws in two-valued predicate logic}\label{s:0-1first}
In this section, we recall some facts from classical finite model theory that we aim to generalize later to the context of many-valued logics.

Consider first a purely relational vocabulary $\tau$ for the usual finitary first-order logic that we will denote by $\mathcal{L}_{\omega\omega}$, as it is customary in abstract model theory~\cite{barfer} and even in some works on many-valued logics (see e.g.~\cite{bano2}), where the $\omega$ subscripts represent the finitary character of, respectively, conjunctions/disjunctions and quantifier strings. Sometimes we will also refer to the stronger logics $\mathcal{L}_{\omega_1\omega}$ and $\mathcal{L}_{\infty\omega}$ which allow for conjunctions and disjunctions of, respectively, countably many or arbitarily many formulas, while keeping the finitary character of quantifier strings.

A $\tau$-sentence is said to be \emph{parametric} in the sense of~\cite[p.\ 277]{ober} (or, alternatively, it is an \emph{Oberschelp condition}) if it is equivalent to a finite conjunction of first-order formulas of the form: 
\begin{equation*}
\forall^{\not=}x_{1},\dots ,x_{k}\, \phi (x_{1},\dots ,x_{k}),
\end{equation*}
which abbreviates $\forall x_{1}\dots \forall x_{k}\,(\bigwedge\nolimits_{i<j}\lnot (x_{i}=x_{j})\rightarrow \phi(x_{1},\dots ,x_{k}))$, where $\phi (x_{1},\dots ,x_{k})$ is a quantifier-free formula such that all of its atomic subformulas $Rx_{i_{1}}\dots x_{i_{k}} $ distinct from identities have 
\begin{equation*}
\{x_{i_{1}}\dots x_{i_{k}}\}=\{x_{1},\dots ,x_{k}\}.
\end{equation*}
In the case where $k=1$, any formula $\forall x\,\phi (x)$ where $\phi $ is quantifier-free, is parametric. For example, $\forall x\,\lnot Rxx\wedge \forall ^{\not=}x,y\,(Rxy\rightarrow Ryx)$ is parametric, whereas $\forall^{\not=}x,y,z\,(Rxy\wedge Ryz\rightarrow Rxz)$ is not. It is not difficult to check that any universal formula of the form $\forall x_{1}\dots \forall x_{k}\,\phi (x_{1},\dots ,x_{k})$ where each atomic subformula of $\phi$ distinct from identities contains all the variables $x_{1},\dots ,x_{k}$ is parametric. A class of structures is called an \emph{Oberschelp class} if it can be axiomatized by parametric sentences.

Oberschelp's extension~\cite[Thm.\ 3]{ober} of Fagin's zero-one law says:\emph{\
on finite models and finite purely relational vocabularies, for any class $\mathbb{K}$
definable by a finite set of parametric sentences, any first-order sentence $\phi$ is almost surely satisfied by finite members of\/ $\mathbb{K}$ or almost surely not satisfied}. More precisely, if $\mathbb{K}_{n}$ is the set of models in $\mathbb{K}$ with domain $[n]=\{1,\ldots,n\},$ then 
\begin{equation*}
\mu _{n}(\phi )=\frac{|\{\mathfrak{M}\in \mathbb{K}_{n}: \mathfrak{M\models } \phi \}|}{|\mathbb{K}_{n}|}\text{ \ \emph{converges to} }0\text{ \emph{or} }1.
\end{equation*}
Oberschelp's result actually holds for sentences of the variable-bounded infinitary logic $\mathcal{L}_{\infty\omega }^{\omega}$, i.e.\ the fragment of the infinitary logic $\mathcal{L}_{\infty\omega}$ determined by formulas containing finitely many variables only. This is a generalization of a well-known result by Kolaitis and Vardi for the class of all finite models \cite{KoVa}. An accessible presentation of these results can be found in~\cite{flum}.

If there are no Oberschelp conditions, the counting probability measure $\mu _{n}$ in the above zero-one law corresponds to assigning probability $\frac{1}{2}$ to the atomic events $R(i_{1},\ldots,i_{k})$, $i_{1},\ldots,i_{k}\in \lbrack n]$, in the random model $\mathfrak{M}\in \mathbb{K}_{n}$ and these events are probabilistically independent. The zero-one law holds, with the same asymptotic probability for each $\phi$, if the random model is obtained by assigning a fixed probability $p_{R}\in (0,1)$ to the atomic event $R(x_{1},\ldots,x_{k})$ for each $R\in \tau $.\footnote{In the presence of Oberschelp conditions, one must take conditional probabilities, thus the atomic events are not necessarily independent and their probabilities may differ (but remain independent of the size of the universe).} There is a wealth of positive and negative results pertaining to the case in which $p_{R}$ varies with $n$ (cf.~\cite{spen}).


\section{The case of finitely valued predicate logics}\label{s:finitely}

Let $\alg{A}=\tuple{A,\wedge^\alg{A},\vee^\alg{A},\ldots}$ be an algebra (which  can be finite or infinite) with a lattice reduct $\tuple{A,\wedge^\alg{A},\vee^\alg{A}}$ and possibly with additional operations (usually $\lnot^\alg{A}$, $\rightarrow^\alg{A}$, or $\conj^\alg{A}$). We will call $\alg{A}$ a \emph{lattice algebra}. The underlying lattice may be bounded, in which case $0$ and $1$ will denote respectively the bottom and top element (which may, but need not, be denoted by constants of the signature of $\alg{A}$).

For the associated $\alg{A}$-valued logic $\mathcal{L}^\alg{A}$ and a first-order
relational vocabulary $\tau $, the first-order logic $\mathcal{L}^\alg{A}(\tau)$ is built in the usual way from variables $x_{1},x_{2},\ldots$, relation symbols in $\tau $, connectives $\wedge, \vee, \ldots$ (corresponding to the operations of $\alg{A}$), and quantifiers $\forall$ and $\exists$.

Models are $\alg{A}$-valued $\tau$-structures $\mathfrak{M}=\tuple{M,R^{\mathfrak{M}}}_{R\in \tau }$ with domain $M\not=\emptyset$, where $R^{\mathfrak{M}}\colon M^{\tau (R)}\rightarrow A$, and $\tau(R)$ denotes the arity of $R\in \tau$; $\Mod_{\tau }^{\alg{A}}$ will denote the class of all $\alg{A}$-valued $\tau$-structures.

Define inductively for each formula $\varphi (x_{1},\ldots,x_{k})$ in $\mathcal{L}^\alg{A}(\tau)$ with $k$ free variables, a mapping $||\varphi ||^{\mathfrak{M}}\colon M^{k}\rightarrow A,$ interpreting the atomic formulas according to $\mathfrak{M}$, the connectives according to $\alg{A}$, and the quantifiers as suprema and infima\ in $\alg{A}$ ($\mathfrak{M}$ is supposed to be {\em safe}, that
is, the required infima and suprema exist, which in the context of finite structures is always guaranteed).

For a finite (or complete) algebra $\alg{A}$, we may extend the semantics to the infinitary logic $\mathcal{L}_{\omega _{1}\omega }^\alg{A}$ by setting $||\bigwedge\nolimits_{n}\varphi _{n}||^{\mathfrak{M}}:=\inf_{n}||\varphi_{n}||^{\mathfrak{M}}$ (all formulas $\varphi _{n}$ having free variables in a common finite set) and similarly for $\bigvee_{n}\varphi _{n}$.

The classical logics $\mathcal{L}_{\omega \omega}$ and $\mathcal{L}_{\omega_{1}\omega}$ may be respectively identified with $\mathcal{L}^{\alg{B}_{2}}$ and $\mathcal{L}_{\omega _{1}\omega }^{\alg{B}_{2}}$, where $\alg{B}_{2}$ is the two-element Boolean algebra.

\subsection{The zero-one law for finitely valued predicate logics}\label{ss:01law-finitely}

Let $\alg{A}$ be a  finite lattice algebra, $\tau \,$a finite relational vocabulary, and $\Mod_{\tau ,n}^\alg{A}$ be the class of $\alg{A}$-valued  $\tau $-structures with domain $[n]=\{1,\ldots,n\}$.\footnote{This may be identified with the product $\prod_{R\in \tau }A^{[n]^{\tau(R)}}$.
} Given $\varphi (x_{1},\ldots,x_{k})\in \mathcal{L}^\alg{A}(\tau )$, $i_{1},\ldots,i_{k}\leq n,$ and $a\in A$, define the probability that a randomly chosen model $\mathfrak{M} \in\Mod_{\tau ,n}^\alg{A}$ assigns the value $a$ to $\varphi (i_{1},\ldots,i_{k})$ as:

\begin{equation*}
\mu _{n}(\varphi (i_{1},\ldots,i_{k})=a):=\frac{|\{\mathfrak{M}\in \Mod_{\tau,n}^\alg{A}: ||\varphi ||^{\mathfrak{M}}(i_{1},\ldots,i_{k})=a\}|}{|\Mod_{\tau ,n}^\alg{A}|}
\end{equation*}
This definition assigns the same weight to all models in $\Mod_{\tau ,n}^\alg{A},$ and the same probability 
\begin{equation*}
\mu _{n}(R(i_{1},\ldots,i_{\tau (R)})=a):=\frac{1}{|A|}
\end{equation*}
to the event that an atomic statement $R(i_{1},\ldots,i_{\tau (R)})$ takes the value $a\in A,$ for any $a$ and numbers $i_{1},\ldots,i_{\tau (R)}\in \lbrack n]$.

More generally, and reciprocally, we may fix, for each $R\in \tau $ a probability distribution $p_{R} \colon A\rightarrow \lbrack 0,1]$ (that is, $\sum_{a\in A}p_{R}(a)=1$) such that $p_{R}(a)>0$ for any $a\in A$, and define a probability distribution Pr$_{n}\colon \Mod_{\tau ,n}^\alg{A}\rightarrow \lbrack 0,1]$ in the class of \alg{A}-valued models with domain $[n]$:

\begin{equation*}
\text{Pr}_{n}(\mathfrak{M}):=\prod\{p_{R}(R^{\mathfrak{M}}(i_{1},\ldots,i_{\tau (R)})) : R\in \tau ,\ i_{1},\ldots,i_{\tau (R)}\in \lbrack n]\}
\end{equation*}
obtained by identifying $\mathfrak{M}$ with its atomic $\alg{A}$-valued diagram.
Then, we may define for any formula $\varphi (x_{1},\ldots,x_{k})\in \mathcal{L}^\alg{A}(\tau ),$ $a\in A,$ and $i_{1},\ldots,i_{k}\in \lbrack n],$
$$\mu _{n}(\varphi (i_{1},\ldots,i_{k})=a):=\sum \{Pr_{n}(\mathfrak{M}) : \mathfrak{M} \in \Mod_{\tau ,n}^\alg{A}, ||\varphi ||^{\mathfrak{M}}(i_{1},\ldots,i_{k})=a\}.$$ 

Notice that, for the sake of clarity, we are using the simplified notation $\varphi (i_{1},\ldots,i_{k})=a$ to denote the event $ ||\varphi ||^{\mathfrak{M}}(i_{1},\ldots,i_{k})=a$ for an arbitrary model $\mathfrak{M}\in \Mod_{\tau ,n}^\alg{A}$.

It is not hard to check that, under this definition, 
\begin{equation*}
\mu _{n}(R(i_{1},\ldots,i_{\tau (R)})=a)=p_{R}(a),
\end{equation*}
and the atomic events $R(i_{1},\ldots,i_{k})=a,$ $R^{\prime }(j_{1},\ldots,j_{k})=b$
are (probabilistically) independent if $R \neq R'$ or $\tuple{i_{1},\ldots,i_{k}} \neq \tuple{j_{1},\ldots,j_{k}}$ or $a \neq b$.

The next theorem generalizes Fagin's zero-one law to the finitely-valued context.

\begin{Thm}\label{label} For any sentence $\varphi \in \mathcal{L}^\alg{A}(\tau )$ and $a\in A$, $\lim_{n}\mu _{n}(\varphi =a)=1$ or $\lim_{n}\mu _{n}(\varphi
=a)=0$. Equivalently, for any sentence $\varphi \in \mathcal{L}^\alg{A}(\tau )$, there is a (unique) $a\in A$ such that $\lim_{n}\mu _{n}(\varphi =a)=1$.
\end{Thm}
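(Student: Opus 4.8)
The plan is to reduce the finitely-valued law to the classical one of Oberschelp by means of a uniform translation. First I would introduce, for each relation symbol $R \in \tau$ of arity $m = \tau(R)$ and each value $a \in A$, a fresh two-valued relation symbol $R_a$ of the same arity, and set $\tau^\ast = \{R_a : R \in \tau,\ a \in A\}$. An $\alg{A}$-valued $\tau$-structure $\mathfrak{M}$ then corresponds bijectively to a classical $\tau^\ast$-structure $\mathfrak{M}^\ast$ on the same domain, where $R_a^{\mathfrak{M}^\ast}(\bar\imath)$ holds precisely when $R^{\mathfrak{M}}(\bar\imath) = a$. The crucial point is that the image of this correspondence is exactly the class $\mathbb{K}$ of those $\tau^\ast$-structures in which, for every $R$ and every tuple, precisely one of the $R_a$ holds. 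This ``exactly one'' requirement is expressed by the sentences $\forall x_1 \dots x_m \bigvee_{a \in A} R_a(\bar x)$ and $\forall x_1 \dots x_m \bigwedge_{a \neq b} \lnot(R_a(\bar x) \wedge R_b(\bar x))$, each of which is universal with every atomic subformula containing all the quantified variables, hence parametric by the criterion recalled above; thus $\mathbb{K}$ is an Oberschelp class.

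Next I would define, by induction on formulas, a translation assigning to each $\varphi(x_1,\dots,x_k) \in \mathcal{L}^{\alg{A}}(\tau)$ and each $a \in A$ a classical $\tau^\ast$-formula $\varphi_a$ with the intended meaning ``$\varphi$ takes the value $a$''. For atoms one sets $(R\bar x)_a := R_a(\bar x)$. For a connective $c$ with interpretation $c^{\alg{A}}$ one puts $(c(\psi^1,\dots,\psi^r))_a := \bigvee \{ \psi^1_{b_1} \wedge \dots \wedge \psi^r_{b_r} : c^{\alg{A}}(b_1,\dots,b_r) = a\}$, a finite disjunction because $A$ is finite. The quantifier clauses are the delicate ones: since $\|\exists y\,\psi\|^{\mathfrak{M}}(\bar\imath) = \bigvee_{j} \|\psi\|^{\mathfrak{M}}(\bar\imath,j)$ in the finite lattice $\alg{A}$, the value of $\exists y\,\psi$ equals $a$ exactly when every value taken lies below $a$ and some finite family of realized values joins to $a$. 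I would therefore set
\[
(\exists y\,\psi)_a := \Bigl(\forall y\, \bigvee_{b \le a}\psi_b\Bigr) \wedge \bigvee_{\substack{S \subseteq A\\ \bigvee S = a}} \bigwedge_{b \in S} \exists y\, \psi_b,
\]
and dually, using meets and the up-set of $a$, for $(\forall y\,\psi)_a$. A routine induction then establishes the key correctness lemma: $\|\varphi\|^{\mathfrak{M}}(\bar\imath) = a$ if and only if $\mathfrak{M}^\ast \models \varphi_a(\bar\imath)$. Since $A$ is finite, each $\varphi_a$ is a \emph{finitary} first-order formula, so no passage to infinitary logic is needed here.

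Finally I would match up the probabilities. Under the bijection $\mathfrak{M} \mapsto \mathfrak{M}^\ast$, the distribution $\mathrm{Pr}_n$ on $\Mod^{\alg{A}}_{\tau,n}$ transports to exactly the distribution on $\mathbb{K}_n$ used in Oberschelp's law (the uniform counting measure when all $p_R(a) = 1/|A|$, and the corresponding conditional measure in general), because choosing an $\alg{A}$-valued structure is literally choosing, for each relation and each tuple, one value of $A$; consequently $\mu_n(\varphi = a)$ equals the classical probability that a random member of $\mathbb{K}_n$ satisfies $\varphi_a$. Oberschelp's theorem, applied to the Oberschelp class $\mathbb{K}$ and the first-order sentence $\varphi_a$, yields that this probability converges to $0$ or to $1$, which is the first assertion. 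For the ``equivalently'' clause, observe that the events $\{\varphi = a\}_{a \in A}$ partition $\Mod^{\alg{A}}_{\tau,n}$, so $\sum_{a \in A}\mu_n(\varphi = a) = 1$ for every $n$; passing to the limit of this finite sum, the numbers $\lim_n \mu_n(\varphi = a) \in \{0,1\}$ must sum to $1$, forcing exactly one of them to equal $1$. The main obstacle I anticipate is getting the quantifier clauses exactly right and proving their correctness: the supremum (resp.\ infimum) defining a quantifier need not be attained at any single instance, so the translation must capture it as a join (resp.\ meet) of a realized family of values, and one must check that the displayed disjunction over subsets $S$ with $\bigvee S = a$ faithfully encodes ``the join of the realized values equals $a$'' --- this is where finiteness of $A$ is essential.
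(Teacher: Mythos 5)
Your proposal is correct and takes essentially the same route as the paper: the same auxiliary vocabulary $\tau\times A$ of predicates $R^a$, the same pair of parametric sentences carving out the Oberschelp class $\mathbb{K}$, the same inductive multi-translation (your disjunction over subsets $S\subseteq A$ with $\bigvee S=a$ is just a reindexing of the paper's tuples $b_1,\ldots,b_m$ with $m\le|A|$), and the same transport of the product measure before invoking Oberschelp's theorem. The quantifier subtlety you flag is handled correctly by your clause, exactly as in the paper.
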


\begin{proof} The equivalence of both statements follows from the fact that, for each $\varphi$, the events $\varphi=a$, with $a\in A$, are exhaustive and mutually incompatible. The first formulation follows from the classical zero-one laws via the following multi-translation $\theta \longmapsto \tuple{\theta^{a}}_{a\in A}$, defined by simultaneous induction, from $\mathcal{L}^\alg{A}(\tau)$ into classical $\mathcal{L}_{\omega \omega }(\tau \times A)$, $\tau\times A=\{R^{a}:R\in \tau ,a\in A\}$:

$R(x_{1},\ldots,x_{\tau (R)})^{a}:=R^{a}(x_{1},\ldots,x_{\tau (R)})$, for $R\in \tau$

$\circ(\varphi _{1},\ldots,\varphi_{k})^{a}:=\bigvee_{\circ^\alg{A}(b_{1},\ldots,b_{k})=a}\varphi _{1}^{b_{1}}\wedge\ldots\wedge \varphi _{k}^{b_{k}}$, for any connective $\circ$ of arity $k$ in the
signature of $\alg{A}$

$(\forall x\varphi )^{a}:=(\bigvee_{b_{1}\wedge \ldots\wedge b_{m}=a,\text{ }m\leq |A|}\exists x\,\varphi^{b_{1}}\wedge \ldots\wedge \exists x\,\varphi^{b_{m}})\wedge \forall x\,(\bigvee_{b\geq a}\varphi ^{b})$

$(\exists x\,\varphi )^{a}:=(\bigvee_{b_{1}\vee \ldots\vee b_{m}=a,\text{ }m\leq|A|}\exists x\,\varphi^{b_{1}}\wedge \ldots\wedge \exists x\, \varphi^{b_{m}})\wedge \forall x\,(\bigvee_{b\leq a}\varphi ^{b})$,

\noindent and the corresponding model transformations from $\Mod_{\tau }^\alg{A}$
into $\Mod_{\tau \times A}$:
\begin{equation*}
\mathfrak{M}\longmapsto \mathfrak{M}_{\tau \times A}:=\tuple{M;(R^{a})^{\mathfrak{M}_{\tau \times A}}}_{R^{a}\in \tau \times A},
\end{equation*}
where $(R^{a})^{\mathfrak{M}_{\tau \times A}}:=\{\overline{i}\in M : R^{\mathfrak{M}}(\overline{i})=a\}$,
which are easily seen to satisfy:
\begin{equation*}
||\theta||^{\mathfrak{M}}(i_{1},\ldots,i_{k})=a \ \ \Leftrightarrow \ \mathfrak{M}_{\tau \times A}\models \theta^{a}[i_{1},\ldots,i_{k}].
\end{equation*}
\label{transl}The model transformations establish a domain-preserving bijection between $\Mod_{\tau}^\alg{A}$ and the class $\mathbb{K}\subseteq \Mod_{\tau\times A}$ of classical models of the parametric $(\tau \times A)$-sentences

$\forall x_{1}\ldots x_{\tau (R)}\,\bigvee_{a\in A}R^{a}(x_{1},\ldots,x_{\tau (R)}),$

$\forall x_{1}\ldots x_{\tau (R)}\,\bigwedge_{a\not=b}\lnot (R^{a}(x_{1},\ldots,x_{\tau(R)})\wedge R^{b}(x_{1},\ldots,x_{\tau (R)})).$

\noindent Being an Oberschelp class, $\mathbb{K}$ satisfies the classical zero-one law if we assign probability $\mu _{n}^{\mathbb{K}}(R^{a}(x_{1},\ldots,x_{k})):=p_{R}(a)$ to the classical atomic events $R^{a}(x_{1},\ldots,x_{k})$. As the latter assignment yields $\Pr_{n}(\mathfrak{M}_{\tau \times A})=\Pr_{n}(\mathfrak{M)}$ for $\mathfrak{M} \in \Mod_{\tau ,n}^\alg{A}$ and the transformation is bijective  from $\Mod_{\tau ,n}^\alg{A}$ onto $\mathbb{K}_n$ (models in $\mathbb{K}$ with domain $[n]$). Then,
$\mu_{n}(\varphi =a)=\\\mu _{n}\{\mathfrak{M}\in \Mod_{\tau ,n}^\alg{A} : ||\varphi||^{\mathfrak{M}}=a\}=\mu _{n}^{\mathbb{K}}\{\mathfrak{M}_{\tau \times A}\in \mathbb{K}_{n} : \mathfrak{M}_{\tau \times A}\models \varphi^{a}\}=\mu _{n}^{\mathbb{K}}(\varphi^{a})$.

Thus, $\mu _{n}(\varphi =a)$ converges to $0$ or $1$.
\end{proof}

\begin{Rmk} \label{rmk1} \emph{Theorem \ref{label} holds if the models are restricted to any class of $\alg{A}$-valued structures satisfying conditions that are parametrically expressible in $\mathcal{L}_{\omega \omega }(\tau \times A)$ via the translation. Just add these conditions to the definition of the class $\mathbb{K}$ in the proof.
For example, $\alg{A}$-valued irreflexive symmetric graphs:}

$\forall x\,R^{0}xx$, $\forall ^{\not=}xy\,\bigwedge_{a\in A}(R^{a}xy\leftrightarrow R^{a}yx),$

\noindent\emph{or structures with crisp identity:}

$\forall x\,(x\approx ^{1}x)$, $\forall ^{\not=}xy\,(x\approx ^{0}y)$.\end{Rmk}

\begin{Rmk} \emph{Theorem~\ref{label} holds also for sentences in finite vocabularies of the variable-bounded sublogic $(\mathcal{L}^\alg{A})_{\omega_{1}\omega }^{\omega }$ of $\mathcal{L}_{\omega _{1}\omega}^\alg{A}$, where each formula has finitely many free and bound variables, because of the obvious extension of the translation to the infinitary logic; for example, $$(\bigwedge\nolimits_{n}\varphi _{n})^{a}:=\bigvee_{b_{1}\wedge \ldots\wedge b_{k}=a,\text{ }k\leq |A|,\text{ }n_{1},\ldots,n_{k}}(\varphi_{n_{1}}^{b_{1}}\wedge \ldots\wedge \varphi _{n_{k}}^{b_{m}})\wedge \bigwedge\nolimits_{n}\bigvee _{b\geq a}\varphi _{n}^{b}$$ sends it to classical $\mathcal{L}_{\omega _{1}\omega }^{\omega}$, known to satisfy the zero-one law~\cite{KoVa}).}\end{Rmk}

\begin{Rmk}\label{rem:S5}\emph{Zero-one laws have been studied for classical modal logics (e.g.~\cite{kapron1,lebars}). In the finitely valued case, any fully modal sentence of the $\alg{A}$-valued version S$5_{c}^\alg{A}$ of propositional bimodal logic S$5$ over finite universal crisp frames (cf.~\cite{Caicedo-Rodriguez:OrderModal,Caicedo-Rodriguez:BimodalGodel}) satisfies a
zero-one law, because this logic may be interpreted as the one-variable fragment of $\mathcal{L}^\alg{A}$ on unary predicates via the translation $\varphi \mapsto \widehat{\varphi}$:}

$\widehat{p_{i}}:=P_{i}(w)$

$\widehat{\square \varphi}:=\forall w\widehat{\varphi }(w)$

$\widehat{\Diamond \varphi}:=\exists w\widehat{\varphi }(w)$

$\widehat{t(\varphi _{1},\ldots,\varphi _{k})}:=t(\widehat{\varphi }_{1}(w),\ldots,\widehat{\varphi }_{k}(w)),$

\noindent \emph{which sends any fully modal formula to a sentence.}
\end{Rmk}


\subsection{Finitely valued random models and extension axioms}\label{ss:finitely-random}

In the above proof of the finitely valued zero-one law lurks a countable $\alg{A}$-valued random model corresponding the classical random model that one can obtain for the language of the translation.

\begin{Thm}\label{count}
There is a countable $\alg{A}$-valued $\tau $-model $\mathfrak{R}_{\tau }^\alg{A}$ such that, for any sentence $\varphi \in \mathcal{L}^\alg{A}(\tau)$, we have: $\lim_n \mu_{n}(\varphi =a)= 1$ iff\/ $||\varphi ||^{\mathfrak{R}_{\tau}^\alg{A}}=a $.
\end{Thm}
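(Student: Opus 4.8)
The plan is to transport the classical random model along the same bijective translation used in the proof of Theorem~\ref{label}. Recall that the model transformation $\mathfrak{M}\longmapsto \mathfrak{M}_{\tau\times A}$ is a domain-preserving bijection between $\Mod_{\tau}^\alg{A}$ and the Oberschelp class $\mathbb{K}\subseteq\Mod_{\tau\times A}$, and that it satisfies the key equivalence $||\theta||^{\mathfrak{M}}(\overline{i})=a \Leftrightarrow \mathfrak{M}_{\tau\times A}\models\theta^{a}[\overline{i}]$. Since $\mathbb{K}$ is axiomatized by parametric sentences, the classical theory of almost-surely-true sentences over $\mathbb{K}$ is complete and is satisfied by a countable classical random model; call it $\mathfrak{R}$. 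First I would observe that $\mathfrak{R}$ may be taken inside $\mathbb{K}$: the extension axioms (relative to the parametric conditions of $\mathbb{K}$) are themselves almost surely true and jointly describe a countable structure that, because it models the two parametric $(\tau\times A)$-sentences displayed in the proof, lies in the image of the transformation. Hence there is a unique $\alg{A}$-valued $\tau$-structure $\mathfrak{R}_{\tau}^\alg{A}$ with $(\mathfrak{R}_{\tau}^\alg{A})_{\tau\times A}=\mathfrak{R}$; this is our candidate.

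Next I would verify the stated biconditional. For a sentence $\varphi\in\mathcal{L}^\alg{A}(\tau)$ and $a\in A$, the translation identity gives $||\varphi||^{\mathfrak{R}_{\tau}^\alg{A}}=a \Leftrightarrow \mathfrak{R}\models\varphi^{a}$. On the other hand, the computation at the end of the proof of Theorem~\ref{label} shows $\mu_{n}(\varphi=a)=\mu_{n}^{\mathbb{K}}(\varphi^{a})$, so $\lim_{n}\mu_{n}(\varphi=a)=1$ iff $\varphi^{a}$ is almost surely true over $\mathbb{K}$, which by the classical theory of the random model $\mathfrak{R}$ holds iff $\mathfrak{R}\models\varphi^{a}$. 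Chaining these equivalences yields $\lim_{n}\mu_{n}(\varphi=a)=1 \Leftrightarrow ||\varphi||^{\mathfrak{R}_{\tau}^\alg{A}}=a$, as required. The uniqueness of the value $a$ with limit $1$ is already guaranteed by Theorem~\ref{label}, so $\mathfrak{R}_{\tau}^\alg{A}$ assigns to each sentence exactly its almost sure value.

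The main obstacle I anticipate is justifying that the classical countable random model $\mathfrak{R}$ can be chosen to lie in the Oberschelp class $\mathbb{K}$, rather than merely in $\Mod_{\tau\times A}$, so that the inverse transformation $\mathfrak{R}\mapsto\mathfrak{R}_{\tau}^\alg{A}$ is defined at all. This amounts to checking that the two functionality-and-totality parametric sentences ($\forall\overline{x}\,\bigvee_{a}R^{a}\overline{x}$ and the pairwise-incompatibility axioms) are consequences of the almost-sure theory over $\mathbb{K}$, which is immediate since they axiomatize $\mathbb{K}$ itself and thus hold in every member, in particular in $\mathfrak{R}$. A secondary technical point is that the almost-sure theory must be read \emph{relative} to the conditional measure on $\mathbb{K}_n$ (the extension axioms are the Oberschelp-relative ones), but this is exactly the setting of the classical result invoked in the previous subsection, so no further work is needed beyond citing it. The remaining steps are the purely formal chaining of equivalences above and carry no real difficulty.
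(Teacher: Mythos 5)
Your proposal is correct and follows essentially the same route as the paper: take the countable classical random model of the Oberschelp class $\mathbb{K}$ from the proof of Theorem~\ref{label}, pull it back through the domain-preserving bijection to obtain $\mathfrak{R}_{\tau}^{\alg{A}}$, and chain the equivalences $\lim_n\mu_n(\varphi=a)=1$ iff $\lim_n\mu_n^{\mathbb{K}}(\varphi^a)=1$ iff $\mathfrak{R}_{\tau\times A}\models\varphi^a$ iff $||\varphi||^{\mathfrak{R}_\tau^{\alg{A}}}=a$. Your extra care in checking that the random model actually lies in $\mathbb{K}$ (so the inverse transformation is defined) is a point the paper leaves implicit, but it is handled correctly and does not change the argument.
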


\begin{proof}
Take the $\alg{A}$-model $\mathfrak{R}_{\tau }^\alg{A}$
corresponding to the countable $\omega $-categorical random model $\mathfrak{R}_{\tau \times A}$ of the class $\mathbb{K}$ introduced in the proof of Theorem \ref{label}. Then, $\lim_n\mu _{n}(\varphi =a)= 1$ iff $\lim_n \mu _{n}^{\mathbb{K}}(\varphi ^{a})= 1$ iff $\mathfrak{R}_{\tau \times A}\models \varphi ^{a}$ iff $||\varphi ||^{\mathfrak{R}_{\tau}^\alg{A}}=a$.
\end{proof}

The classical extension axioms characterizing $\mathfrak{R}_{\tau \times A}$, and indirectly $\mathfrak{R}^\alg{A}_{\tau}$, are rarely expressible in $\mathcal{L}^\alg{A}$, but they are expressible in ($\mathcal{L}_{\omega \omega })_{\tau \times A}$ as:

\begin{center}
$Ext_{A}(k,f):\forall ^{\not=}x_{1},\ldots,x_{k}\,\exists x_{k+1}\not=x_{1},\ldots,x_{k}\,\bigwedge\nolimits_{\varphi \in F_{k+1}}\varphi^{f(\varphi )}(x_{1},\ldots,x_{k+1})$,
\end{center}

\noindent where $F_{k+1}$ is the set of atomic $\tau$-formulas with variables in $x_{1},\ldots,x_{k},x_{k+1}$, truly containing the variable $x_{k+1}$, and $f\colon F_{k+1}\rightarrow A$. These axioms are automatically consistent with the set of parametric sentences defining the class $\mathbb{K}$ in the proof of Theorem~\ref{label}. If there are additional parametric conditions, only the extension axioms consistent with them are considered.

\begin{Exm}\emph{According to Remark \ref{rmk1}, the random $\alg{A}$-valued irreflexive symmetric graph may be described as the unique countable $\alg{A}$-weighted irreflexive symmetric graph $\mathfrak{R}=\tuple{G,R}$ such that, for any distinct $x_{1},\ldots,x_{k}\in G$ and $a_1,\ldots, a_k\in A$, there is an $x_{k+1}\not=x_{1},\ldots,x_{k}$ in $G$ such that $R(x_{i},x_{k+1})=a_i$. This structure was studied as a Fra\"iss\'e limit in \cite{BaNog18}.}
\end{Exm}

Since the extension axioms and thus the random model are independent of the probabilities $\{p_{R}\}_{R\in \tau}$, we can use Theorem~\ref{count} to obtain the following corollary.

\begin{Cor}\label{ob}
For any sentence $\varphi \in \mathcal{L}^\alg{A}$, the value $a\in A$ such that $\lim_n \mu _{n}(\varphi =a)= 1$ depends only on $\varphi $, and not on the probability distributions $p_{R}$ (provided that $p_{R}(a)>0$ for all $a\in A)$.
\end{Cor}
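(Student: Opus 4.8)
The plan is to read Corollary~\ref{ob} as an immediate consequence of Theorem~\ref{count}, exploiting the fact that the countable random model $\mathfrak{R}_\tau^\alg{A}$ constructed there is entirely independent of the chosen probability distributions $\{p_R\}_{R\in\tau}$. So the whole argument is a matter of tracing what in the earlier construction depends on the $p_R$ and what does not.

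First I would fix a sentence $\varphi\in\mathcal{L}^\alg{A}$ and recall, from Theorem~\ref{count}, that for each $a\in A$ we have $\lim_n\mu_n(\varphi=a)=1$ if and only if $\|\varphi\|^{\mathfrak{R}_\tau^\alg{A}}=a$. The key observation is that the right-hand side makes no reference to any probability distribution: $\mathfrak{R}_\tau^\alg{A}$ is the $\alg{A}$-valued model corresponding to the $\omega$-categorical classical random model $\mathfrak{R}_{\tau\times A}$ of the Oberschelp class $\mathbb{K}$, and that random model is determined up to isomorphism purely by the extension axioms $Ext_A(k,f)$ together with the parametric sentences defining $\mathbb{K}$. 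As noted just before the statement, these extension axioms — and hence $\mathfrak{R}_{\tau\times A}$, and hence $\mathfrak{R}_\tau^\alg{A}$ — are completely independent of the $p_R$, since the only constraint imposed on each $p_R$ is the strict positivity condition $p_R(a)>0$ for all $a\in A$, which guarantees that every value $a$ survives with nonzero probability but plays no further role in selecting which finite structures appear in the Fra\"iss\'e-style limit.

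Concretely, I would argue as follows. Let $\{p_R\}_{R\in\tau}$ and $\{p_R'\}_{R\in\tau}$ be two families of distributions, each satisfying $p_R(a)>0$ and $p_R'(a)>0$ for all $a\in A$, giving rise to measures $\mu_n$ and $\mu_n'$ respectively. By Theorem~\ref{count} applied to each family, the unique value $a$ with $\lim_n\mu_n(\varphi=a)=1$ equals $\|\varphi\|^{\mathfrak{R}_\tau^\alg{A}}$, and likewise the unique value $a'$ with $\lim_n\mu_n'(\varphi=a')=1$ equals $\|\varphi\|^{\mathfrak{R}_\tau^\alg{A}}$ for the \emph{same} model $\mathfrak{R}_\tau^\alg{A}$, precisely because the construction of this model did not use the probabilities. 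Hence $a=a'$, which is exactly the assertion of the corollary.

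The only point that genuinely requires care — and the step I would flag as the main obstacle — is justifying that the random model really is the same for both families, i.e. that the positivity hypothesis $p_R(a)>0$ is the sole way in which the probabilities enter. The extension axioms quantify over \emph{all} functions $f\colon F_{k+1}\to A$, so they demand that every combinatorially possible one-point extension occur; for the classical zero-one law to force each $Ext_A(k,f)$ to hold almost surely, one needs exactly that each atomic event $R^a(\cdots)$ has asymptotic probability $p_R(a)$ bounded away from $0$, which is guaranteed by $p_R(a)>0$ independently of the particular positive value. Once this is observed, the independence of $\mathfrak{R}_\tau^\alg{A}$ from $\{p_R\}$ is immediate, and the corollary follows with no further computation.
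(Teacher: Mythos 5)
Your proposal is correct and follows essentially the same route as the paper: the paper's own justification is precisely that the extension axioms, and hence the random model $\mathfrak{R}_\tau^\alg{A}$, are independent of the distributions $\{p_R\}$, so Theorem~\ref{count} immediately yields the corollary. Your added remark that positivity of each $p_R(a)$ is exactly what makes every extension axiom almost surely true (and is the only way the probabilities enter) is the correct and intended justification of that independence.
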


We call the value $a$ in Corollary~\ref{ob} the \emph{almost sure} or \emph{asymptotic} truth-value of $\varphi$. This corollary entails that  assigning the same probability $\frac{1}{|A|}$ to all truth-values has the same effect in the limit as assigning a positive probability to each one.

\begin{Rmk}\emph{If $p_{R}$ is allowed to take value 0 at some $a\in A$, this corresponds to asking the Oberschelp conditions $\forall x_{1}\ldots\forall x_{n}\,\lnot R^{a}(x_{1},\ldots,x_{n})$. Thus, the asympotic value of $\varphi$ still exists and depends on the support of $p_{R}$ only.}
\end{Rmk}


\subsection{Complexity of almost sure values}\label{ss:complexity}

Grandjean~\cite{grand} showed that the problem of determining for the classical zero-one law if a sentence is almost surely true or not is in PSPACE, and it has been observed~\cite{com} that this holds in the presence of Oberschelp conditions. Moreover, it is PSPACE-complete, even for the vocabulary of pure identity.

Since our translation is exponential in the length of the original formula, it yields only an exponential space upper bound for the computation of the almost sure value of an $\alg{A}$-valued sentence. However, the shape of the translation will allow us to give a simple direct proof of an upper polynomial space bound. PSPACE-completeness will be achieved for logics with crisp identity.

\begin{Thm}
For any sentence $\varphi \in \mathcal{L}^\alg{A}(\tau)$, the almost sure value of $\varphi$ may be computed in polynomial space.
\end{Thm}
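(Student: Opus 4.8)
The plan is to compute the almost sure value of $\varphi$ directly, following the recursive structure of the multi-translation from the proof of Theorem~\ref{label}, but without ever writing down the (exponentially large) translated formula explicitly. By Theorem~\ref{count} and Corollary~\ref{ob}, the almost sure value of $\varphi$ equals $||\varphi||^{\mathfrak{R}_{\tau}^{\alg{A}}}$, where $\mathfrak{R}_{\tau}^{\alg{A}}$ is the countable random $\alg{A}$-valued model; equivalently, the almost sure value is the value forced by the extension axioms $Ext_A(k,f)$. The key observation driving the space bound is that in the random model these axioms guarantee that \emph{every} consistent pattern of atomic truth-values on a fresh element is realized. Consequently, to evaluate a subformula $\psi(x_1,\ldots,x_k)$ at a tuple of distinct elements, what matters is only which atomic $(\tau)$-types are realizable over that tuple, and by the homogeneity of the random model this depends only on the quantifier structure of $\psi$, not on the specific witnesses chosen.

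First I would set up a recursive evaluation procedure $\mathrm{Val}(\psi, \sigma)$ that returns the value $||\psi||^{\mathfrak{R}}(\sigma)$, where $\sigma$ records, for the finitely many free variables of $\psi$, enough local data (namely the atomic diagram on the currently instantiated distinct elements) to determine the value. For atomic $\psi$ the value is read off $\sigma$ directly. For a connective $\circ$ of arity $m$, I recurse on the arguments and apply $\circ^{\alg{A}}$; since $\alg{A}$ is a fixed finite algebra, this is a constant-time table lookup. The essential cases are the quantifiers. For $\forall x\,\psi$, the value is $\inf$ over all possible instantiations of $x$; in the random model this infimum ranges over all atomic extension patterns that are consistent with the Oberschelp conditions, together with the already-fixed elements (the case $x = x_i$ for some existing $x_i$ must also be considered). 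The crucial point is that the infimum is over a set of values that is \emph{independent of $n$} and determined purely by the finitely many realizable atomic patterns, so I can compute it by iterating over those patterns. The existential case is dual, using $\sup$.

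The space analysis then proceeds as follows. The recursion depth is bounded by the length of $\varphi$, since each recursive call strips one connective or quantifier. At each quantifier I must iterate over the realizable atomic patterns for one new element relative to at most the finitely many (bounded by the variable count of $\varphi$) already-fixed elements; the number of such patterns is $|A|^{|F_{k+1}|}$, which is large but can be enumerated one at a time, storing only a single pattern (of polynomial size) plus a running $\inf$ or $\sup$ accumulator (a single element of $A$, hence constant space). Thus each stack frame uses polynomial space, and the total is the product of recursion depth and per-frame space, giving a polynomial bound overall. The main obstacle, and the step requiring the most care, is justifying that the quantifier value in the random model is genuinely captured by this finite enumeration of local atomic patterns — that is, that $||\forall x\,\psi||^{\mathfrak{R}}$ and $||\exists x\,\psi||^{\mathfrak{R}}$ depend only on which atomic types over the fixed tuple are realized and not on any global feature of the infinite model. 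This is exactly what the extension axioms and the $\omega$-categoricity of $\mathfrak{R}_{\tau\times A}$ (invoked in Theorem~\ref{count}) secure: every consistent one-element extension pattern is realized, and all realizing elements are interchangeable, so the supremum and infimum are attained over the finite set of realizable atomic patterns. Once this homogeneity fact is stated precisely, the space bound follows by the routine recursion-depth times frame-size estimate sketched above.
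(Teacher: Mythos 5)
Your proposal is correct and follows essentially the same route as the paper: your local data $\sigma$ is exactly the paper's notion of a complete description $\Delta(\mathbf{x})$, your appeal to homogeneity of the random model to reduce quantifiers to a finite enumeration of one-element extension patterns is the paper's key lemma, and the recursion-depth-times-frame-size space bound is the same. The only cosmetic difference is that you compute the value $||\psi||^{\mathfrak{R}}(\sigma)$ directly, whereas the paper decides $Ext_A+\Delta(\mathbf{x})\vdash\theta^{a}$ for each candidate $a\in A$; since that theory is complete, the two formulations are interchangeable.
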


We denote by $Ext_A$ the set of all extension axioms. It is enough to show that for any $a\in A$, it may be determined in polynomial space whether $Ext_A\vdash \varphi ^{a}$. Then, one may check this for each $a\in A$ utilizing the same space until finding the (unique) $a$ for which it holds. Using Grandjean's strategy~\cite{grand}, we can do it without writing $\varphi^{a}$ explicitly.

Given a sequence of variables $\mathbf{x}=x_{1},\ldots,x_{k}$, a \emph{complete
description} of $\mathbf{x}$ is a $(\tau \times A)$-formula $\Delta (\mathbf{x)}$ of the form $\bigwedge\nolimits_{\psi \in At_{k}}{\psi}^{f(\psi)}$ where $At_{k}$ is the set of atomic $\tau$-formulas with variables in $\mathbf{x}$ and $f \colon At_{k} \to A$. It is easy to see that any such $\Delta (\mathbf{x)}$ is realizable in the classical random model $\mathfrak{R}_{\tau \times A}$, because existential sentences are always almost surely true.

Notice that, due to the strong homogeneity of $\mathfrak{R}_{\tau \times A}$
(any isomorphism between finite substructures may be extended to an
automorphism, see \cite{Hodges}), all tuples of $\mathfrak{R}_{\tau \times A}$ satisfying the
same complete description $\Delta (\mathbf{x)\ }$have the same type in $\mathfrak{R}_{\tau \times A}$; therefore, the theory $Ext_{A}+\Delta (\mathbf{x)}$ is complete for all formulas with their free variables in $\mathbf{x}$.

In what follows, $|\alpha |$ will denote the length of a formula or a sequence of variables, or the cardinality of a finite set. For simplicity we assume that the symbols in $\tau $ and $\tau \times A$ as well as the variables have length $1$. Particularly, $|\theta ^{a}|=|\theta |$ for any atomic $\theta \in L_{\tau }^\alg{A}$ and $a\in A$.

Set $M_{\tau }=\max \{\tau (R):R\in \tau \}$. Then, it is easy to check that $|\Delta (\mathbf{x)|=}\sum _{R\in \tau }(\tau (R)+c^{\prime })|\mathbf{x}|^{\tau (R)}\leq |\tau |(M_{\tau}+c^{\prime })|\mathbf{x}|^{M_{\tau}}=c|\mathbf{x}|^{M_{\tau}}$ for a suitable constant $c$.

\begin{Lem}
Given a complete description $\Delta(\mathbf{x})$, any formula $\theta \in \mathcal{L}_{\tau}^\alg{A}$ with its free variables contained in $\mathbf{x}$ and $a\in A$, it may be decided whether $Ext_{A}+\Delta (\mathbf{x})\vdash \theta^{a}$ or not in space $c(|\mathbf{x}| + |\theta |)^{M_{\tau}}$.
\end{Lem}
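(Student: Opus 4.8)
The plan is to decide membership by computing, recursively on the structure of $\theta$, the \emph{value} of $\theta$ under the complete theory $Ext_{A}+\Delta(\mathbf{x})$ --- that is, the unique $c\in A$ with $Ext_{A}+\Delta(\mathbf{x})\vdash\theta^{c}$, which exists precisely because (as just observed) $Ext_{A}+\Delta(\mathbf{x})$ is complete for all formulas with free variables in $\mathbf{x}$ --- and then checking whether that value equals $a$. This never writes $\theta^{a}$ out explicitly, in keeping with Grandjean's strategy; equivalently, we are computing the value taken by $\theta$ at any tuple realizing $\Delta(\mathbf{x})$ in the random model $\mathfrak{R}^{\alg{A}}_{\tau}$. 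The recursion has three cases. If $\theta$ is atomic, its value is read off directly from $\Delta(\mathbf{x})$, namely $f(\theta)$. If $\theta=\circ(\varphi_{1},\dots,\varphi_{m})$ for a connective $\circ$ of $\alg{A}$, I recursively compute the values $c_{1},\dots,c_{m}$ of $\varphi_{1},\dots,\varphi_{m}$ under the \emph{same} description $\Delta(\mathbf{x})$ and return $\circ^{\alg{A}}(c_{1},\dots,c_{m})$; the $m$ subcomputations are performed one after another, reusing the same work space, and only their $O(1)$-sized outputs in $A$ are retained.

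The quantifier case is the heart of the argument and is where the extension axioms and the strong homogeneity of $\mathfrak{R}_{\tau\times A}$ enter. For $\theta=\forall y\,\varphi$ (and dually $\exists y\,\varphi$), I claim its value under $\Delta(\mathbf{x})$ is the meet (resp.\ join) in $\alg{A}$, taken over all complete descriptions $\Delta'(\mathbf{x},y)$ that extend $\Delta(\mathbf{x})$ and are consistent with the governing parametric conditions, of the value of $\varphi$ under $\Delta'$. Indeed, every such $\Delta'$ is realized in $\mathfrak{R}_{\tau\times A}$ --- by the extension axioms when $y$ is separated from $x_{1},\dots,x_{k}$, and trivially otherwise --- while conversely every domain element, together with the fixed tuple, realizes exactly one such $\Delta'$; since $A$ is finite the infimum (resp.\ supremum) over $y$ is attained and equals the displayed meet (resp.\ join). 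Thus the quantifier step reduces the problem for $\theta$ in context $\mathbf{x}$ to computing the value of the strictly shorter formula $\varphi$ in the one-larger context $\mathbf{x},y$, looping over the extensions $\Delta'$ and maintaining a running meet or join in $\alg{A}$.

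The remaining work, and the main obstacle, is the space accounting. The decisive observation is that the quantity $|\mathbf{x}|+|\theta|$ never increases along the recursion: a connective step shrinks $|\theta|$ and leaves the context fixed, while a quantifier step deletes a quantifier from $\theta$ but enlarges the context by a single variable. Consequently, along any branch the context grows by at most the number of nested quantifiers, hence by at most $|\theta|$, so every context $\mathbf{x}'$ arising satisfies $|\mathbf{x}'|\le|\mathbf{x}|+|\theta|$ and every complete description ever needed has size at most $c(|\mathbf{x}|+|\theta|)^{M_{\tau}}$. The procedure therefore keeps a \emph{single} complete description on the current (largest) context, extending it in place upon entering a quantifier and truncating it upon leaving; crucially, the block of this description belonging to the freshly quantified variable doubles as the counter that enumerates the extensions $\Delta'$, so that cycling through the exponentially many $\Delta'$ incurs no additional storage, while the running meets/joins and the position pointer kept at each nesting level contribute only lower-order terms absorbed into $c$ (equivalently, the per-level increments telescope to the size of the deepest description, rather than accumulating an extra factor of $|\theta|$). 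Hence the whole computation runs in space $c(|\mathbf{x}|+|\theta|)^{M_{\tau}}$, and one decides $Ext_{A}+\Delta(\mathbf{x})\vdash\theta^{a}$ by testing whether the value it returns is $a$.
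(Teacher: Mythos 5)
Your proposal is correct and follows essentially the same route as the paper: an induction on the structure of $\theta$ in which the quantifier step enumerates all one-variable expansions $\Delta'(\mathbf{x},y)$ of the current complete description (each realized in the random model by the extension axioms), subcomputations are run sequentially in reused space, and the bound telescopes because $|\mathbf{x}|+|\theta|$ does not increase when a quantifier is traded for a fresh context variable. The only (harmless) difference is presentational: you compute the unique value $c$ with $Ext_{A}+\Delta(\mathbf{x})\vdash\theta^{c}$ directly, whereas the paper decides derivability of $\theta^{a}$ by unwinding the translation clause by clause; the two are interchangeable by the completeness of $Ext_{A}+\Delta(\mathbf{x})$.
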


\begin{proof}
Induction on the complexity of $\theta$.

\begin{enumerate}
\item If $\theta$ is atomic, then $Ext_{A}+\Delta (\mathbf{x})\vdash \theta^{a}$ iff $\theta^{a}$ is a conjunct of $\Delta (\mathbf{x),}$ and this is clearly verifiable in space $|\Delta (\mathbf{x)}|+|\theta ^{a}|=|\Delta (\mathbf{x)}|+|\theta |\leq c|\mathbf{x}|^{M_{\tau}}+|\theta |\leq c(|\mathbf{x}|+|\theta |)^{M_{\tau}}$.

\item If $\theta $ is $\circ(\varphi _{1},\ldots,\varphi _{k})$ where $\circ$ is a connective,
then by definition of the translation and completeness of $Ext_{A} + \Delta (\mathbf{x})$: $Ext_{A} + \Delta (\mathbf{x})\vdash \theta^{a}$ iff there are $b_{1},\ldots,b_{k}\in A$ such that $\circ^\alg{A}(b_{1},\ldots,b_{k})=a$ and
\begin{equation*}
Ext_{A}+\Delta (\mathbf{x})\vdash \varphi _{i}^{b_{i}},\text{ for } i=1,\ldots,k.
\end{equation*}
But the $k$ statements in the display may be verified each one in space $ c(|\mathbf{x}|+|\theta
|)^{M_{\tau}} \geq c(|\mathbf{x}|+|\varphi _{i}|)^{M_{\tau}}$ by induction hypothesis. And we may do this sequentially in
the same space  for each $b_{i}$ of
each relevant tuple $b_{1},\ldots,b_{k}$, until finding a tuple for which the
displayed statement holds.

\item If $\theta $ is $\forall x\,\varphi (x)$, then by definition of the
translation and completeness: $Ext_{A}+\Delta(\mathbf{x})\vdash (\forall
x\,\theta (x))^{a}$ iff there are $b_{1},\ldots,b_{m}\in A$ such that $b_{1}\wedge
\ldots\wedge b_{m}=a$, and 
\begin{equation*}
Ext_{A}+\Delta (\mathbf{x})\vdash \exists x\,\varphi (x)^{b_{1}},\ldots,\exists
x\,\varphi (x)^{b_{m}},\forall x\,(\bigvee_{b\geq a}\varphi (x)^{b})
\end{equation*}
equivalently,
\begin{eqnarray*}
Ext_{A}+\Delta (\mathbf{x}) &\vdash &\exists x\,\varphi (x)^{b_{i}},\text{ for 
}i=1,\ldots,m \\
Ext_{A}+\Delta (\mathbf{x}) &\not\vdash &\exists x\,\varphi (x)^{b},\text{ for
all }b\not\geq a
\end{eqnarray*}
because $\forall x\,(\bigvee_{b\geq a}\varphi (x)^{b})$ is equivalent in $Ext_{A}$ to $\bigwedge\nolimits_{b\not\geq a}\lnot \exists x\,\varphi (x)^{b}$, and $Ext_{A}+\Delta (\mathbf{x})\vdash \lnot \exists x\,\varphi (x)^{b}$ iff 
$Ext_{A}+\Delta (\mathbf{x})\not\vdash \exists x\,\varphi (x)^{b}$ due to
completeness. Now, $Ext_{A}+\Delta (\mathbf{x})\vdash \exists x\,\varphi
(x)^{b_{1}}$ if and only if there is an expansion $\Delta ^{\prime }(x,\mathbf{x})$ of $\Delta (\mathbf{x})$ such that 
\begin{equation*}
Ext_{A}+\Delta ^{\prime }(x,\mathbf{x})\vdash \varphi (x)^{b_{1}}
\end{equation*}
(if $\mathbf{a}$ is a tuple in $\mathfrak{R}_{\tau \times A}$ satisfying $\Delta (\mathbf{x})$ and $b$ is a witness of $\exists x\,\varphi (x,\mathbf{a})^{b_{1}}$, then the complete description $\Delta ^{\prime }(x,\mathbf{x})$
of $\mathfrak{R}_{\tau \times A}\upharpoonright \{b,\mathbf{a}\}$ does the
job$)$. By induction hypothesis, we may check this for each possible
expansion of $\Delta (\mathbf{x})$ in space $c(|\mathbf{x}|+1+|\varphi
(x)|)^{M_{\tau}}\leq c(|\mathbf{x}|+|\theta |)^{M_{\tau}}$. Doing this
sequentially in the same space for each $b_{1},\ldots,b_{m}, b$ such that $b_{1}\wedge \ldots\wedge b_{m}=a$ and $b\not\geq a$, we may check whether $Ext_{A}+\Delta (\mathbf{x})\vdash (\forall x\,\theta (x))^{a}$.

\item If $\theta $ is $\exists x\,\varphi (x)$, then, similarly to the previous case, $Ext_{A}+\Delta (\mathbf{x})\vdash (\exists x\,\varphi (x))^{a}$ iff there are $b_{1},\ldots,b_{m}\in A$ such that $b_{1}\vee \ldots \vee b_{m}=a$, and
\begin{eqnarray*}
Ext_{A}+\Delta (\mathbf{x}) &\vdash &\exists x\,\varphi (x)^{b_{i}},\text{ for 
}i=1,\dots,m \\
Ext_{A}+\Delta (\mathbf{x}) &\not\vdash &\exists x\,\varphi (x)^{b},\text{ for
all }b\not\leq a
\end{eqnarray*}
and this claim may be verified in space $c(|\mathbf{x}|+|\theta |)^{M_{\tau}}$.
\end{enumerate}
\end{proof}

\begin{Cor}
The almost sure value of any sentence $\varphi \in \mathcal{L}^\alg{A}(\tau)$ may be computed in space $c(1+|\varphi |)^{M_{\tau}}$.
\end{Cor}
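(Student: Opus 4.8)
The plan is to reduce the computation of the almost sure value directly to the preceding Lemma by taking $\mathbf{x}$ to be the empty sequence of variables. First, recall that by Theorems~\ref{label} and~\ref{count} the almost sure value of $\varphi$ is the unique $a\in A$ for which $||\varphi||^{\mathfrak{R}_{\tau}^\alg{A}}=a$, equivalently the unique $a$ with $\mathfrak{R}_{\tau\times A}\models\varphi^{a}$; since $Ext_{A}$ is a complete theory axiomatizing the $\omega$-categorical random model $\mathfrak{R}_{\tau\times A}$, this is in turn the unique $a\in A$ such that $Ext_{A}\vdash\varphi^{a}$. So the task reduces to identifying that $a$ within the claimed space bound.

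Next I would observe that a sentence has no free variables, so the relevant complete description is $\Delta(\mathbf{x})$ for the empty tuple $\mathbf{x}$, namely the empty conjunction $\top$. For this choice $Ext_{A}+\Delta(\mathbf{x})$ is simply $Ext_{A}$, which is complete for sentences, so the hypotheses of the Lemma are met with $|\mathbf{x}|=0$. Applying the Lemma with $\theta=\varphi$ then shows that, for each fixed $a\in A$, whether $Ext_{A}\vdash\varphi^{a}$ can be decided in space $c(|\mathbf{x}|+|\varphi|)^{M_{\tau}}=c\,|\varphi|^{M_{\tau}}\leq c(1+|\varphi|)^{M_{\tau}}$.

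To finish, I would loop over the elements of $A$: for each $a\in A$ run the Lemma's decision procedure to test $Ext_{A}\vdash\varphi^{a}$, reusing the same workspace between successive values of $a$, and output the (by the exhaustiveness and mutual incompatibility of the events $\varphi=a$ established in Theorem~\ref{label}, unique) $a$ for which the test succeeds. The key point to verify is that this outer loop does not inflate the space bound: since $\alg{A}$ is fixed and finite, $|A|$ is a constant, so maintaining a counter over $A$ together with the current candidate costs only constant additional space, and crucially the decision procedures for distinct $a$ reuse the same memory rather than running in parallel. Hence the entire computation stays within $c(1+|\varphi|)^{M_{\tau}}$, the harmless $+1$ absorbing this constant bookkeeping and covering the degenerate empty-tuple description. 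The main (and rather mild) obstacle is precisely this space-reuse accounting; everything else is a direct instantiation of the Lemma.
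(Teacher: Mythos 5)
Your proof is correct and follows essentially the same route as the paper: reduce the problem to deciding whether $Ext_{A}\vdash \varphi ^{a}$ by instantiating the preceding Lemma at a sentence, then loop over the finitely many $a\in A$ reusing the same workspace until the unique successful $a$ is found. The only (cosmetic) difference is that the paper instantiates the Lemma with a one-variable complete description $\Delta (x)$, justified by $Ext_{A}\vdash \exists x\,\Delta (x)$, rather than with the empty tuple; this sidesteps the degenerate empty-description case you have to gesture at and is precisely where the $+1$ in the bound $c(1+|\varphi |)^{M_{\tau }}$ comes from.
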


\begin{proof}
Let $\Delta (x)$ be any complete description in one variable. Then, $Ext_{A}\vdash \varphi ^{a}$ iff $Ext_{A}+\exists x\Delta
(x)\vdash \varphi ^{a}$ (because $Ext_{A}\vdash \exists x\,\Delta (x))$ iff $Ext_{A}+\Delta (x)\vdash \varphi ^{a}$. Thus, $Ext_{A}\vdash \varphi ^{a}$ may be decided in space $c(1+|\varphi |)^{M_{\tau}}$. Check this for any $a\in A$ until finding the only $a$ for which it holds.
\end{proof}

If the logic satisfies additional Oberschelp conditions in the translation (besides the necessary ones), the previous result is obtained by considering only complete descriptions compatible with those conditions.

\emph{$\alg{A}$-valued logic with crisp identity}, in symbols $\mathcal{L}_{\approx}^\alg{A}$, will be $\mathcal{L}^\alg{A}$ enriched with a distinguished binary relation symbol $\approx$ interpreted in each model as the characteristic function of the diagonal.

This corresponds to having the Oberschelp conditions $\forall x\,(x\approx^{1}x)$, $\forall^{\not=}xy\,(x\approx^{0}y)$ in the translated logic. Hence, this logic has a zero-one law and satisfies the above results.

\begin{Thm}
If $\alg{A}$ has a term $u$ such that $u^\alg{A}(1)=0$ and $u^\alg{A}(0)=1$, then computing the almost sure values of sentences of $\mathcal{L}_{\approx}^\alg{A}$ is PSPACE-complete.
\end{Thm}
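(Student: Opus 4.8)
Since membership in PSPACE already follows from the corollary above (crisp identity being captured by the Oberschelp conditions $\forall x\,(x\approx^{1}x)$ and $\forall^{\not=}xy\,(x\approx^{0}y)$ in the translated logic), the entire content of the statement is the lower bound. The plan is to reduce, in polynomial time, Grandjean's PSPACE-complete problem---deciding whether a classical first-order sentence over the vocabulary of pure identity is almost surely true---to the problem of deciding, given a sentence $\psi$ and a value $a$, whether the almost sure value of $\psi$ in $\mathcal{L}_{\approx}^\alg{A}$ equals $a$. Throughout I take the relational vocabulary $\tau$ to be empty, so that the only non-logical symbol is the crisp identity $\approx$; in this case there is, up to isomorphism, exactly one model with domain $[n]$, and the asymptotic value of an $\alg{A}$-valued sentence is simply the value it eventually takes on these models (equivalently, by Theorem~\ref{count}, its value in the random model).

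The term $u$ will play the role of a Boolean negation on the two-element set $\{0,1\}$. Writing classical sentences with the complete set of connectives $\lnot,\wedge,\vee,\forall,\exists$, I define a translation $(\cdot)^{*}$ into $\mathcal{L}_{\approx}^\alg{A}$ by $(x=y)^{*}:=x\approx y$, $(\lnot\psi)^{*}:=u(\psi^{*})$, $(\psi_{1}\wedge\psi_{2})^{*}:=\psi_{1}^{*}\wedge\psi_{2}^{*}$, $(\psi_{1}\vee\psi_{2})^{*}:=\psi_{1}^{*}\vee\psi_{2}^{*}$, $(\forall x\,\psi)^{*}:=\forall x\,\psi^{*}$, and $(\exists x\,\psi)^{*}:=\exists x\,\psi^{*}$. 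The key lemma to establish, by induction on $\psi$, is that on every crisp-identity model $\mathfrak{M}$ one has $||\psi^{*}||^{\mathfrak{M}}\in\{0,1\}$, with $||\psi^{*}||^{\mathfrak{M}}=1$ exactly when $\mathfrak{M}\models\psi$ classically and $0$ otherwise.

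The induction goes through because $\{0,1\}$ is closed under the relevant semantic operations: $\approx$ is crisp and so already takes only the values $0$ and $1$; since $0$ and $1$ are the bottom and top of the lattice, meet and join restrict to classical conjunction and disjunction on $\{0,1\}$; the quantifiers, read as infimum and supremum over a subset of $\{0,1\}$, restrict to the classical $\forall$ and $\exists$; and although the term $u$ may wander outside $\{0,1\}$ on arbitrary inputs, we only ever apply it to a value $||\psi^{*}||^{\mathfrak{M}}\in\{0,1\}$, where by hypothesis $u^\alg{A}(1)=0$ and $u^\alg{A}(0)=1$ produce exactly the Boolean negation. This is the only place where the assumption on $u$ is used.

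Granting the lemma, the asymptotic value of $\psi^{*}$ equals $1$ precisely when $\psi$ is almost surely true and $0$ otherwise, so $\psi\longmapsto(\psi^{*},1)$ reduces the classical problem to deciding whether the almost sure value of $\psi^{*}$ is $1$. This reduction is computable in polynomial---indeed linear---time, because $u$ is a fixed term of $\alg{A}$, so replacing each occurrence of $\lnot$ by $u(\cdot)$ increases the length of the formula by only the constant factor $|u|$. Hence the problem is PSPACE-hard, and together with membership it is PSPACE-complete. I expect the main (and essentially only) obstacle to be the careful verification of the inductive lemma, specifically checking that no step can force a value strictly between $0$ and $1$; this is exactly where crispness of $\approx$, the bottom/top status of $0$ and $1$, and the defining property of $u$ are simultaneously needed.
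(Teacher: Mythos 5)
Your proposal is correct and follows essentially the same route as the paper: the paper also reduces from the almost-sure (equivalently, infinite-models) theory of pure identity, via the translation sending $=$ to $\approx$ and $\lnot$ to the term $u$, and verifies by the same induction that the translated sentence takes only the values $0$ and $1$ and agrees with the classical truth value, invoking Stockmeyer's PSPACE-hardness result for the lower bound.
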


\begin{proof} Consider the translation $\varphi \longmapsto \widehat{\varphi}$ from classical pure identity logic into $\mathcal{L}_{\approx}^\alg{A}$ that sends $=$ to $\approx $ and $\lnot $ to $u$. A straightforward induction on the complexity of $\varphi $ shows that, for any $\mathbf{m}$ in $\mathfrak{R}_{\tau }^\alg{A}$, $||\widehat{\varphi}||^{\mathfrak{R}_{\tau}^\alg{A}}(\mathbf{m})\in \{0,1\}$ and $||\widehat{\varphi}||^{\mathfrak{R}_{\tau}^\alg{A}}(\mathbf{m})=1$ iff $\mathfrak{R}_{\tau \times A}\models \varphi \lbrack \mathbf{m}]$.  Notice that for the classical random model of the extension axioms of this logic: $\mathfrak{R}_{\tau \times A}\models m\approx^{1}n$ iff $m=n$, $\mathfrak{R}_{\tau \times A}\models m\approx ^{0}n\,$ iff $m\not=n$. Therefore, for any classical identity sentence $\varphi $, $\lim_n\mu _{n}(\widehat{\varphi}=1)= 1$ iff $||\widehat{\varphi}||^{\mathfrak{R}_{\tau}^\alg{A}}=1$ iff $Ext_{A}\vdash \varphi $ iff $I_{\infty}\vdash \varphi $, where $I_{\infty}$ is the theory of identity in infinite sets. But deducibility in the later theory is PSPACE-hard by results of Stockmeyer~\cite{stock}.
\end{proof}

\section{The set of almost sure values in finitely valued predicate logics}\label{s:SetAlmostSure}

Call $a\in A$ an \emph{almost sure} truth-value of $\mathcal{L}^\alg{A}$ (of $\alg{A}$)  if there is a sentence $\varphi \in \mathcal{L}^\alg{A}$ such that $\lim_n\mu_{n}(\varphi =a)=1$. Do all the elements of $A$ arise as almost sure values? If not, which are those values? We will see that for
many familiar classes of finite algebras the answer to the first question is positive (for example \MV-chains for \L ukasiewicz logic, or \G-chains for G\"odel logic), for others only a small subset are almost sure values ($0$ or $1$ for Boolean algebras, at most four values for De Morgan algebras, at most three for the lattice-negation reducts of G-chains, etc.).

The first assertion follows from the following general lemma.

\begin{Lem}\label{al}
Let $t(v_{1},\ldots,v_{k})$ be a term in the signature of a finite lattice algebra $\alg{A}$, then $m=\inf_{a_{1},\ldots,a_{k}\in A}t^\alg{A}(a_{1},\ldots,a_{k})$ and $s=\sup_{a_{1},\ldots,a_{k}\in A}t^\alg{A}(a_{1},\ldots,a_{k})$ are almost sure truth-values of $\mathcal{L}^\alg{A}$.
\end{Lem}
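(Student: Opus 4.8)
The plan is to exhibit, for each term $t(v_1,\ldots,v_k)$, explicit sentences whose almost sure truth-values are precisely the infimum $m$ and the supremum $s$ of the range of $t^{\alg{A}}$ over $A^k$. The natural candidates are universally and existentially quantified sentences built from $t$ applied to $k$ distinct predicates. Concretely, let me pick $k$ fresh unary relation symbols $P_1,\ldots,P_k \in \tau$ and consider the sentence
\begin{equation*}
\varphi_m := \forall x\, t(P_1(x),\ldots,P_k(x)) \qquad\text{and}\qquad \varphi_s := \exists x\, t(P_1(x),\ldots,P_k(x)).
\end{equation*}
Since the quantifiers are interpreted as infima and suprema in $\alg{A}$, the value $\|\varphi_m\|^{\mathfrak{M}}$ is the infimum over all $x$ in the domain of $t^{\alg{A}}(P_1^{\mathfrak{M}}(x),\ldots,P_k^{\mathfrak{M}}(x))$, and dually for $\varphi_s$ with suprema. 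My claim is that $\varphi_m$ has almost sure value $m$ and $\varphi_s$ has almost sure value $s$.

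By Theorem~\ref{count} it suffices to compute the value of these sentences in the countable random model $\mathfrak{R}^{\alg{A}}_{\tau}$, i.e.\ to show $\|\varphi_m\|^{\mathfrak{R}^{\alg{A}}_{\tau}} = m$ and $\|\varphi_s\|^{\mathfrak{R}^{\alg{A}}_{\tau}} = s$. The key is the extension property of the random model: for any finite tuple of elements and any prescribed assignment $f$ of $\alg{A}$-values to the atomic formulas involving a fresh point, there is a point realizing it (this is exactly the content of the axioms $Ext_A(k,f)$). First I would establish the upper bound $\|\varphi_m\|^{\mathfrak{R}^{\alg{A}}_{\tau}} \leq m$: choose $a_1,\ldots,a_k \in A$ witnessing $t^{\alg{A}}(a_1,\ldots,a_k) = m$; by the extension property there exists an element $d$ of the random model with $P_i^{\mathfrak{R}^{\alg{A}}_{\tau}}(d) = a_i$ for each $i$, so the single value at $d$ already equals $m$, pulling the infimum down to at most $m$. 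For the lower bound $\|\varphi_m\|^{\mathfrak{R}^{\alg{A}}_{\tau}} \geq m$, observe that for \emph{every} element $x$ we have $t^{\alg{A}}(P_1(x),\ldots,P_k(x)) \geq m$ by definition of $m$ as the global infimum of the range of $t^{\alg{A}}$; hence the infimum over all $x$ is also $\geq m$. Combining gives equality. The argument for $\varphi_s$ is completely dual, using the supremum and the fact that some element realizes the values achieving $s$.

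The step requiring the most care is the interaction between the term $t$ and the lattice suprema/infima used to interpret the quantifiers. One must be careful that the value $t^{\alg{A}}(P_1(x),\ldots,P_k(x))$ genuinely ranges over (a subset of) the image of $t^{\alg{A}}$ as $x$ varies, which is immediate since for each $x$ the arguments $P_i^{\mathfrak{R}^{\alg{A}}_{\tau}}(x)$ are elements of $A$; and that $m,s$ are attained values (guaranteed because $A$ is finite, so the infimum and supremum of the finite set $\{t^{\alg{A}}(\bar a):\bar a\in A^k\}$ lie in that set). I would also note that the argument is insensitive to whether we use $k$ distinct predicates or a single predicate of higher arity, so the lemma is robust. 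Finally, since $m$ and $s$ are almost sure values by this construction, the announced applications—recovering the top and bottom when $t$ is a constant term, or obtaining intermediate values such as all of a finite \MV-chain when suitable terms are available—follow directly.
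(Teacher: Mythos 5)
Your overall route is legitimate and in fact somewhat cleaner than the paper's: you reduce the computation to the countable random model via Theorem~\ref{count} and the extension property, whereas the paper works directly with the finite probability spaces, fixing witness tuples inside $[n]$ and estimating $1-(1-p_R(a_1^1)\cdots p_R(a_k^\ell))^{\lfloor n/k\ell\rfloor}\to 1$. Both are fine, and your use of $k$ unary predicates and a single variable is explicitly sanctioned by the paper's remark following Example~\ref{ex:GN}.

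However, there is one genuine error in your upper-bound step. You assert that $m$ and $s$ are \emph{attained}, ``guaranteed because $A$ is finite, so the infimum and supremum of the finite set $\{t^{\alg{A}}(\bar a):\bar a\in A^k\}$ lie in that set.'' This is false for a general finite lattice: the meet of a finite subset of a lattice need not belong to that subset (take the diamond $\{0,a,b,1\}$ with $a,b$ incomparable and a term whose image is $\{a,b\}$, so that $m=a\wedge b=0$ is not in the image; such terms exist once $\alg{A}$ carries extra operations, which the lemma allows). Consequently there may be no single tuple $\bar a$ with $t^{\alg{A}}(\bar a)=m$ and no single witness $d$ ``pulling the infimum down to $m$.'' The repair is easy and is exactly what the paper does: choose finitely many tuples $\bar a^1,\ldots,\bar a^\ell$ with $t^{\alg{A}}(\bar a^1)\wedge\cdots\wedge t^{\alg{A}}(\bar a^\ell)=m$ (or simply realize all of the finitely many tuples in $A^k$), use the extension property to find a witness for each, and conclude that the infimum over the domain is $\leq m$; combined with your correct lower bound this gives equality. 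The same adjustment is needed for $s$, using joins. Your argument only works as written when $\alg{A}$ is a chain (or more generally when the image of $t^{\alg{A}}$ happens to contain its meet and join).
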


\begin{proof}
Let $R(x)$ be an atomic formula with a single variable $x$.  We show that
\begin{equation*}
\mu _{n}(\forall x_{1}\ldots\forall x_{k}\,t(R(x_{1}),\ldots,R(x_{k}))=m)\ \text{converges to}\ 1.
\end{equation*}
Fix tuples $\tuple{a_{j}^{1}}_{j},\ldots,\tuple{a_{j}^{\ell}}_{j}\in |A|^{k}$, such that $t^\alg{A}(a_{1}^{1},\ldots,a_{k}^{1})\wedge^\alg{A}\ldots\wedge ^\alg{A}t^\alg{A}(a_{1}^{\ell},\ldots,a_{k}^{\ell})=m,$ and\ for each $n\geq k\ell $ find $K\subseteq\lbrack n]^{k}$ of power $|K|=\lfloor n/k\ell \rfloor $ such that all tuples in $K$ have all their components distinct, and two different tuples in $K$ are distinct at each coordinate. Then,

$\mu _{n}(\forall x_{1}\ldots\forall x_{k}\,t(R(x_{1}),\ldots,R(x_{k}))=m)$

$=\Pr_{n}(\inf_{i_{1},\ldots,i_{k}\in \lbrack n]}t^\alg{A}(R(i_{1}),\ldots,R(i_{k}))=m)$

$\geq \Pr_{n}($for some $\tuple{i_{j}^{1}}_{j},\ldots,\tuple{i_{j}^{\ell}}_{j}$ in $[n]^{k}:R(i_{j}^{i})=a_{j}^{i},$ for $i=1,\ldots,\ell ,$ $j=1,\ldots,k)$

$\geq \Pr_{n}($for some $\tuple{i_{j}^{1}}_{j},\ldots,\tuple{i_{j}^{\ell}}_{j}$ in $K:R(i_{j}^{i})=a_{j}^{i},$ for $i=1,\ldots,\ell ,$ $j=1,\ldots,k)$

$=1-\Pr_{n}($for all $\tuple{i_{j}^{1}}_{j},\ldots,\tuple{i_{j}^{\ell}}_{j}$ in $K$ is
false: $R(i_{j}^{i})=a_{j}^{i},$ for $i=1,\ldots,\ell$, $j=1,\ldots,k)$

$=1-\prod_{\tuple{i_{1}^{1},\ldots, i_{k}^{1}},\ldots,\tuple{i_{1}^{\ell},\ldots, i_{k}^{\ell}}\in
K}[1-\Pr_{n}(R(i_{j}^{i})=a_{j}^{i}$ for $i=1,\ldots,\ell$, $j=1,\ldots,k))]$

$=1-(1-p_{R}(a_{1}^{1})\ldots p_{R}(a_{k}^{\ell }))^{\lfloor n/k\ell \rfloor}$.

\noindent For the last two identities we use that the $k\ell $ events $R(i_{j}^{i})=a_{j}^{i}$ form a mutually independent set for distinct $i_{1}^{1},\ldots, i_{k}^{1},\ldots,i_{1}^{\ell},\ldots, i_{k}^{\ell }\in K$. Since $p_{R}(a_{i}^{j})>0$, the last quantity converges to $1$. A similar proof shows
that 
\begin{equation*}
\mu _{n}(\exists x_{1}\ldots\exists x_{k}\,(t(R(x_{1}),\ldots,R(x_{k}))=s) \,\, \text{converges to }1.\qedhere
\end{equation*}
\end{proof}

\begin{Exm}
\emph{It follows that $0$ and $1$ are always almost sure values because, even if they do not appear in the signature of $A$, they are inf and sup of the term $t(v):=v$.}
\end{Exm}

\begin{Exm}\label{ex:LukN}
\emph{Let $\mathrmL_{N+1}=\tuple{\{0,\frac{1}{N},\ldots,\frac{N-1}{N},1\},\wedge ,\vee ,\lnot,\rightarrow}$ be the \MV-chain of $N+1$ elements,
where $\wedge,\vee$, are $\min$ and $\max$, and $\lnot ,\rightarrow$ are \L ukasiewicz implication and negation. Then all values of $\mathrmL_{N+1}$ are almost sure values of the corresponding predicate logic. To see this, notice that using (Chang) connectives:}

\emph{$a\oplus b:=\lnot a\rightarrow b$, $a\odot b:=\lnot (a\rightarrow \lnot b)$, 
$na:=a\oplus \ldots\oplus a,$ $a^{n}:=a\odot \ldots\odot a$ $(n\geq 1)$, the term $t(v)=v^{N}\oplus \lnot v$ has minimum value $\frac{1}{N}$
in $\mathrmL_{N+1}$ (taken at $a=\frac{N-1}{N})$, and thus $t_{k/N}(v)=k(v^{N}\oplus \lnot v)=v^{k}\rightarrow kv^{N} $ has minimum value $\frac{k}{N}$.}
\end{Exm}

\begin{Exm}\label{ex:GN}
\emph{Let $\alg{G}_{N+1}=\tuple{\{0=g_{0}<\ldots<g_{N}=1\},\wedge ,\vee ,\lnot ,\rightarrow}$
be the G-chain of $N+1$ elements, where $\lnot ,\rightarrow $ are G\"{o}del implication and negation, then all values of $\alg{G}_{N+1}$ are almost sure values of the corresponding predicate logic. Define:}

\emph{ $t_{1}(v_{1}):=v_{1}\vee \lnot v_{1}$,
$t_{k+1}(v_{1},\ldots,v_{k+1}):=v_{k+1}\vee (v_{k+1}\rightarrow
t_{k}(v_{1},\ldots,v_{k}))$}

\emph{
\qquad \qquad  $=v_{k+1}\vee (v_{k+1}\rightarrow (v_{k}\vee
(v_{k}\rightarrow \ldots\rightarrow (v_{2}\vee (v_{2}\rightarrow (v_{1}\vee
\lnot v_{1}))\ldots)))$}

\noindent\emph{then the minimum possible value of $t_{1}(v_{1})$ in $\alg{G}_{N+1}$ is 
$g_{1}\vee \lnot g_{1}=g_{1},$ since $g_{0}\vee \lnot g_{0} =1$
and $g_{j}\vee \lnot g_{j}=g_{j}$ for $g_{j}>g_{1}$. For $2\leq k\leq N$: 
\begin{equation*}
\,\min_{v_{1},\ldots,v_{k}\in G_{N+1}}t_{k}(v_{1},\ldots,v_{k})=g_{k}
\end{equation*}%
obtained at $g_{1},\ldots,g_{k},$ because
\begin{center}
$t_{k}(x_{1},\ldots,x_{k})=\left\{ 
\begin{array}{cc}
1 & \text{ if }x_{i+1}\leq x_{i}\text{ for some }i \\ 
x_{k} & \text{ if }x_{i}<x_{i+1}\text{ for all }i.
\end{array}
\right. $
\end{center}}
\end{Exm}

\begin{Rmk}
\emph{Notice that in the proof of Lemma~\ref{al}, we have used a single predicate symbol and $k$ variables to write the sentences}

$\forall x\, t(P_{1}(x),..,P_{k}(x))$

$\exists x\, t(P_{1}(x),..,P_{k}(x))$

\noindent\emph{having the desired almost sure values $m$ and $s$. We may use instead $k$ (unary) predicates and a single variable with a similar proof. Therefore, via the traslation explained in Remark~\ref{rem:S5}, the formulas $\square
t(p_{1},\ldots,p_{k})$ and $\Diamond t(p_{1},\ldots,p_{k})$ of the S$5_{c}^{A}$ have
almost sure values $m$ and $s,$ respectively$.$ It follows then from Example~\ref{ex:LukN} that all values of $\mathrmL_{N+1}$ are almost sure values of S$5_{c}^{\mathrmL_{N+1}},$ since the formula $\square (p^{k}\rightarrow kp^{N})$ has
almost sure value $\frac{k}{N},$ and it follows, from Example~\ref{ex:GN}, that all
values of G$_{N+1}$ are almost sure values of S$5_{c}^{G_{N+1}},$ since the
modal formula}

$\square (p_{k}\vee (p_{k}\rightarrow (p_{k-1}\vee (p_{k-1}\rightarrow
(\ldots(p_{1}\vee \lnot p_{1})\ldots))$

\noindent\emph{takes the almost sure value $g_{k}$.}
\end{Rmk}

We turn now to algebras with few almost sure values and obtain the following result inspired by the techniques in~\cite{gra}.

\begin{Thm}
\label{De Morgan}Let $\alg{A}=\tuple{A,\wedge ,\vee ,\lnot}$ be a finite distributive
lattice with a negation satisfying:
\begin{enumerate}
\item $\lnot (v\wedge w)=\lnot v\vee \lnot w,$ $\lnot (v\vee w)=\lnot
v\wedge \lnot w$
\item $v\leq \lnot \lnot v$
\item $\lnot 1=0$.
\end{enumerate}
Then, the almost sure values of $\mathcal{L}^\alg{A}$ are $0$, $\varepsilon
,\varepsilon ^{\prime },\delta ,\delta ^{\prime}\!$, and\/ $1$, where

\quad $\varepsilon =\sup_{x\in A}(x\wedge \lnot x),$ $\varepsilon ^{\prime
}=\sup_{x\in A}(\lnot x\wedge \lnot \lnot x),\,$

\quad $\delta =\inf_{x\in A}(x\vee \lnot x),$ $\delta ^{\prime
}=\inf_{x\in A}(\lnot x\vee \lnot \lnot x)$.
\end{Thm}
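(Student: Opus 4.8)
The plan is to invoke Theorem~\ref{count} to replace ``almost sure value of $\varphi$'' by the single value $\|\varphi\|^{\mathfrak{R}_{\tau}^{\alg{A}}}$ in the countable random model, and then prove two inclusions. The easy one, that each of $0,\varepsilon,\varepsilon',\delta,\delta',1$ is almost sure, follows directly from Lemma~\ref{al}: applying it to the unary terms $t(v)=v$, $v\wedge\lnot v$, $\lnot v\wedge\lnot\lnot v$, $v\vee\lnot v$, and $\lnot v\vee\lnot\lnot v$ exhibits, as suprema or infima over $A$, precisely the values $1,0,\varepsilon,\varepsilon',\delta,\delta'$. So the real content is the reverse inclusion: every $\|\varphi\|^{\mathfrak{R}_{\tau}^{\alg{A}}}$ lies in $S:=\{0,\varepsilon,\varepsilon',\delta,\delta',1\}$.

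For this I would first normalise $\varphi$. Since the only connectives are $\wedge,\vee,\lnot$, the De Morgan laws (1), the dualities $\lnot\forall x=\exists x\,\lnot$ and $\lnot\exists x=\forall x\,\lnot$ (valid because $\lnot$ is antitone and $\alg{A}$ is finite hence complete), and the identity $\lnot\lnot\lnot v=\lnot v$ (which follows from (1)--(2): antitonicity makes $\lnot$ order-reversing, and applying $v\le\lnot\lnot v$ to $v$ and to $\lnot v$ forces $\lnot^{3}=\lnot$) let me push all negations onto atoms, so every literal is one of $R(\bar t)$, $\lnot R(\bar t)$, $\lnot\lnot R(\bar t)$. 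Distributivity of $\alg{A}$ then permits prenexing. Hence $\|\varphi\|^{\mathfrak{R}_{\tau}^{\alg{A}}}=Q_{1}m_{1}\cdots Q_{n}m_{n}\,\|\beta\|^{\mathfrak{R}_{\tau}^{\alg{A}}}(\bar m)$, where each $Q_{i}m_{i}$ is $\sup_{m_{i}}$ or $\inf_{m_{i}}$ and $\beta$ is a $\{\wedge,\vee\}$-combination of such literals.

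The next step evaluates this nested supremum/infimum. By the extension axioms and the strong homogeneity of $\mathfrak{R}_{\tau}^{\alg{A}}$ used already in \S\ref{ss:complexity}, quantification over domain elements may be replaced, block by block from the inside out, by quantification over the values the atoms involving the new variable take in $A$; these range freely over $A$ and independently across distinct atom instances. The crucial local fact is that, for a single $a\in A$, the supremum over $a$ of any conjunction of literals drawn from $\{a,\lnot a,\lnot\lnot a\}$ lies in $\{1,\varepsilon,\varepsilon'\}$ (using $a\wedge\lnot\lnot a=a$, $\sup_{a}a=\sup_{a}\lnot a=1$, $\sup_{a}(a\wedge\lnot a)=\varepsilon$, $\sup_{a}(\lnot a\wedge\lnot\lnot a)=\varepsilon'$), and dually the infimum of any disjunction of such literals lies in $\{0,\delta,\delta'\}$. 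Putting an $\exists$-block in disjunctive normal form and a $\forall$-block in conjunctive normal form, and distributing $\wedge$ over $\sup$ and $\vee$ over $\inf$, each block replaces its ``new'' literals by a constant from $S$, leaving a $\{\wedge,\vee\}$-combination of the remaining literals with $S$-constants. Carrying this as an invariant down the prefix, the closed sentence evaluates to a $\{\wedge,\vee\}$-combination of elements of $S$.

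The main obstacle is the final step: showing that such combinations do not escape $S$, i.e.\ that $S$ is closed under $\wedge$ and $\vee$, and this is exactly where hypotheses (2)--(3) must be exploited through the order they impose. I would first record $\lnot 0=1$ (from $0\le\lnot\lnot 0$ and $\lnot 1=0$), the dualities $\lnot\varepsilon=\delta'$ and $\lnot\delta=\varepsilon'$ (De Morgan applied to the defining sup/inf), and the comparabilities $\varepsilon=\varepsilon'$ and $\delta\le\delta'$ --- the former because $a\wedge\lnot a\le\lnot\lnot a\wedge\lnot a\le\varepsilon'$ together with the substitution $a\mapsto\lnot a$, the latter by restricting the infimum defining $\delta$ to the image $\lnot A$. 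When $\alg{A}$ is totally ordered --- which covers the motivating \L ukasiewicz, G\"odel and Kleene chains --- $S$ sits inside a chain and closure is automatic, so the theorem is immediate. In the general case the delicate point is to verify that the residual meets and joins among $\varepsilon,\delta,\delta'$ return to $S$; this order-theoretic bookkeeping, where the full force of $v\le\lnot\lnot v$ and $\lnot 1=0$ is needed, is where I expect the real work to lie.
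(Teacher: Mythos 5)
Your overall route coincides with the paper's: Lemma~\ref{al} gives that the six values are almost sure, and the converse is obtained by eliminating quantifiers over models of the extension axioms, using normal forms and replacing each ``configuration'' $R$, $\lnot R$, $\lnot\lnot R$, $R\wedge\lnot R$, $\lnot R\wedge\lnot\lnot R$ (resp.\ their disjunctive duals) by its extremal value in $\{1,\varepsilon,\varepsilon'\}$ (resp.\ $\{0,\delta,\delta'\}$). The paper does this innermost-out by induction on complexity rather than by prenexing, but that difference is cosmetic.

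The genuine gap is precisely the step you defer to at the end: showing that a $\{\wedge,\vee\}$-combination of elements of $E=\{0,\varepsilon,\varepsilon',\delta,\delta',1\}$ stays in $E$. The facts you record ($\lnot 0=1$, $\lnot\varepsilon=\delta'$, $\lnot\delta=\varepsilon'$, $\varepsilon=\varepsilon'$, $\delta\leq\delta'$) do not compare the $\varepsilon$'s with the $\delta$'s, and without such a comparison an element like $\varepsilon\vee\delta$ or $\varepsilon\wedge\delta'$ has no reason to lie in $E$, so the quantifier-elimination output need not reduce to a single member of $E$ and the theorem is not established. The paper closes exactly this hole with its derived law (7): $0\leq\varepsilon\leq\varepsilon'\leq\delta\leq\delta'\leq 1$ together with $\lnot\varepsilon=\lnot\varepsilon'=\delta'$ and $\lnot\delta=\lnot\delta'=\varepsilon'$, so that $E$ is a six-element \emph{chain} closed under $\lnot$, hence a subalgebra of $\alg{A}$, and closure under $\wedge,\vee$ is automatic even when $\alg{A}$ itself is not linearly ordered. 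So what is missing from your argument is the inequality $\varepsilon'\leq\delta$ (equivalently $x\wedge\lnot x\leq y\vee\lnot y$ at the level of the defining terms), and this is not mere bookkeeping: it is the one place where distributivity and hypotheses (2)--(3) are combined nontrivially, and it is the key lemma of the paper's proof. (Incidentally, your observation that $\varepsilon=\varepsilon'$, obtained by restricting the supremum defining $\varepsilon$ to $\lnot A$, is a nice strengthening of the paper's $\varepsilon\leq\varepsilon'$, but it does not substitute for the missing comparison.)
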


\begin{proof}
We already know these values are almost sure by Lemma~\ref{al}. In order to see that they are the only ones, notice first the following derived laws of $\alg{A}$:
\begin{enumerate}
\setcounter{enumi}{3}
\item $v\leq w$ implies $\lnot w\leq \lnot v$
\item $\lnot \lnot \lnot v=\lnot v$
\item $\lnot 0=\lnot \lnot 1=1,$ $\lnot \lnot 0=0$.
\item $0\leq \varepsilon \leq \varepsilon ^{\prime }\leq \delta \leq \delta^{\prime }\leq 1,$ $\lnot \varepsilon =\lnot \varepsilon ^{\prime }=\delta^{\prime },$ $\lnot \delta =\lnot \delta ^{\prime }=\varepsilon ^{\prime },$ thus $E=\{0,\varepsilon ,\varepsilon ^{\prime },\delta ,\delta ^{\prime},1\} $ is a subalgebra of $\alg{A}$.
\end{enumerate}

\noindent Expand $\alg{A}$ to the algebra $\alg{A}^{+}=\tuple{A,\wedge ,\vee ,\lnot,0,\varepsilon ,\varepsilon ^{\prime},\delta ,\delta^{\prime},1}$, and use the same symbols in the signature to denote the constants in $E$. By distributivity, (1), (5), (6) and (7), any term $t(v_{1},\ldots,v_{k})$ in the signature of $\alg{A}^{+}$ may be put in disjunctive normal form: $\bigvee\nolimits_{s}C_{s}$ with $C_{s}=e\wedge \bigwedge\nolimits_{r}\lnot^{n_{r}}v_{i_{r}},$ where $e\in E,$ $v_{i_{r}}\in \{v_{1},\ldots,v_{k}\}$ and $n_{r}\leq 2$. Moreover, by (2), we may assume no pair $\{v_{i},\lnot \lnot v_{i}\}$ appears in $C_{s}$, since $v_{i}\wedge \lnot \lnot v_{i}=v_{i}$. By idempotency and commutativity of $\wedge $, we may assume that each variable appears at most once in $C_{s}$ as: $v_{i},$ $\lnot v_{i},$ or $\lnot \lnot v_{i},$ or at most twice as: $v_{i}\wedge \lnot v_{i},$ or $\lnot v_{i}\wedge \lnot \lnot v_{i}$. Similarly, $t(v_{1},\ldots,v_{k})$ may be put in conjunctive normal form $\bigwedge\nolimits_{s}D_{s}$ with $D_{s}=e\vee\bigvee\nolimits_{r}\lnot ^{n_{r}}v_{i_{r}}$ and no occurrence of both $v_{i},\lnot \lnot v_{i}$. We prove now a result on elimination of quantifiers. Let $Ext(\ell )$ denote the conjunction of the extension axioms $Ext_\alg{A}(k,f)$ for $k\leq \ell ,$ and define for formulas $\varphi ,\psi \in \mathcal{L}_{\tau }^{\alg{A}^{+}}$: $\varphi \equiv _{Ext(\ell )}\psi$ iff $||\varphi(m_{1},\ldots,m_{k})||^{\mathfrak{M}}=||\psi (m_{1},\ldots,m_{k})||^{\mathfrak{M}}$
for any model $\mathfrak{M}$ of $\mathcal{L}_{\tau }^{\alg{A}^{+}}$ satisfying $Ext(\ell)$, and any $m_{1},\ldots,m_{k}\in M$.

\noindent \textbf{Claim}. Any formula $\varphi \in (\mathcal{L}_{\tau}^{\alg{A}^{+}})^{(\ell )}$ is $Ext(\ell )$-equivalent to a quantifier-free formula in the same free variables as $\varphi$.

\noindent We proceed by induction on the complexity of $\varphi$. Assume by induction hypothesis that $\varphi (x_{1},\ldots,x_{k},x_{k+1})\equiv _{Ext(\ell)}\psi (x_{1},\ldots,x_{k},x_{k+1})$ quantifier-free, and $k+1\leq \ell ,$ we must show the same for $\exists x_{k+1}\,\varphi (x_{1},\ldots,x_{k},x_{k+1})$ and $\forall x_{k+1}\,\varphi (x_{1},\ldots,x_{k},x_{k+1})$. By the previous algebraic remarks, we may assume $\psi $ is in disjunctive normal form, i.e.\\ $\bigvee\nolimits_{s}C_{s}(x_{1},\ldots,x_{k},x_{k+1})$ where each $C_{s}$ is a conjunction: 
\begin{equation*}
e\wedge C_{s}^{\prime }(x_{1},\ldots,x_{k})\wedge \bigwedge\nolimits_{r}\lnot^{n_{r}}R_{r}(x_{1},\ldots,x_{k},x_{k+1}),\text{ \ }n_{r}\in \{0,1,2\},
\end{equation*}
$C_{s}^{\prime }(x_{1},\ldots,x_{k})$ being the part not containing the variable $x_{k+1}$, such that each atomic $R_{r}(x_{1},\ldots,x_{k},x_{k+1})$ appearing in $C_{s}$ does it in one and only one of the following forms:

$R_{r}$ alone, $\lnot R_{r}$ alone, $\lnot \lnot R_{r}$ alone, $R_{r}\wedge \lnot R_{r},$ $\lnot R_{r}\wedge \lnot \lnot R_{r}$.

\noindent Substitute the first three types of formulas with (the symbol) $1$, the fourth with $\varepsilon,$ and the fifth with $\varepsilon^{\prime}; $ and let $c$ be the minimum of $e$ and the substituted constants to obtain:

$C_{s}^{+}(x_{1},\ldots,x_{k}):=c\wedge C_{s}^{\prime }(x_{1},\ldots,x_{k})$.

\noindent Since we have substituted the highest possible values in each case, we have for any model $\mathfrak{M}$ and $x_{1},\ldots,x_{k},i\in M$:

$||C_{s}(x_{1},\ldots,x_{k},i)||^{\mathfrak{M}}\leq ||C_{s}^{+}(x_{1},\ldots,x_{k})||^{\mathfrak{M}}$.

\noindent On the other hand, if $\mathfrak{M}$ satisfies $Ext(\ell )$ and $k+1\leq \ell ,$ there is, for any $x_{1},\ldots,x_{k}$ in $M$, an $i_{0}\in M$ such that for each $R_{r}$ appearing in $C_{s}(x_{1},\ldots,x_{k},x_{k+1}),$ the atomic formula $R_{r}(x_{1},\ldots,x_{k},i_{0})$ takes a value making the configuration in which it appears to take the highest possible value; that is,

$||C_{s}(x_{1},\ldots,x_{k},i_{0})||^{\mathfrak{M}}=||C_{s}^{+}(x_{1},\ldots,x_{k})||^{\mathfrak{M}}$.

\noindent Therefore, for any $x_{1},\ldots,x_{k}\in M$

$||\exists x_{k+1}\,C_{s}(x_{1},\ldots,x_{k},x_{k+1})||^{\mathfrak{M}}=\sup_{i\in M}||C_{s}(x_{1},\ldots,x_{k},i)||^{\mathfrak{M}}=\\=||C_{s}^{+}(x_{1},\ldots,x_{k})||^{\mathfrak{M}}$

\noindent and, finally,

$||\exists x_{k+1}\,\varphi (x_{1},\ldots,x_{k},x_{k+1})||^{\mathfrak{M}}=||\bigvee\nolimits_{s}\exists x_{k+1}\,C_{s}(x_{1},\ldots,x_{k},i)||^{\mathfrak{M}}=\\=||\bigvee\nolimits_{s}C_{s}^{+}(x_{1},\ldots,x_{k})||^{\mathfrak{M}},$

\noindent showing that $\exists x_{k+1}\,\varphi (x_{1},\ldots,x_{k},x_{k+1})\equiv _{Ext(\ell)}\bigvee\nolimits_{s}C_{s}^{+}(x_{1},\ldots,x_{k})$.

Symmetrically, one eliminates the quantifier in $\forall x_{k+1}\,\varphi (x_{1},\ldots,x_{k},x_{k+1})$ using a conjunctive normal form of $\psi $ and substituting in the elementary disjunction each configuration by its lowest possible value:

$R_{r}$ alone, $\lnot R_{r}$ alone, $\lnot \lnot R_{r}$ alone go to $0$,

$R_{r}\vee \lnot R_{r}$ goes to $\delta$,

$\lnot R_{r}\vee \lnot \lnot R_{r}$ goes to $\delta ^{\prime }$.

\noindent Finally, any sentence $\varphi \in (\mathcal{L}_{\tau}^{\alg{A}^{+}})^{(\ell )}$ is $Ext(\ell )$-equivalent to a quantifier-free sentence $\psi $ in $\mathcal{L}_{\tau }^{\alg{A}^{+}},$ necessarily a $\wedge,\vee ,\lnot $ combination of elements of $E$ and thus an element $e$ of $E$. Hence, $\lim_n \mu_{n}(\varphi =e)\geq \lim_n \mu_{n}(Ext(\ell))= 1$.
\end{proof}

Clearly, the conditions of the theorem are satisfied by De\ Morgan algebras, which include Boolean algebras and the $\{\wedge ,\vee ,\lnot\}$-reducts of \MV-algebras. They are satisfied also by the $\{\wedge,\vee,\lnot\}$-reduct of any G-algebra since the law $\lnot (v\wedge w)=\lnot v\vee \lnot w$ fails in arbitrary Heyting algebras but holds in prelinear ones. 
%

\begin{Exm}\label{ex:FiniteBoolean}
\emph{The almost sure values for any finite Boolean algebra are just $0$ and $1$,
since $\varepsilon =\varepsilon ^{\prime }=0$ and $\delta =\delta ^{\prime}=1$.}
\end{Exm}

\begin{Exm}\label{Lukas}
\emph{The almost sure values for the $\{\wedge,\vee,\lnot\}$-fragment of $\mathcal{L}^{\mathrmL _{N+1}}$ are $\{0,\frac{1}{2},1\}$ for even $N$, and $\{0,\frac{N-1}{2N},\frac{N+1}{2N},1\}$ for odd $N$. Indeed, $\varepsilon =\varepsilon ^{\prime}=\delta =\delta^{\prime }=\frac{N/2}{N}$ in the first case, and $\varepsilon =\varepsilon ^{\prime }=\max(v\wedge (1-v))=\frac{(N-1)/2}{N}$, $\delta =\delta ^{\prime }=\min (v\vee(1-v))=\frac{(N+1)/2}{N}$ in the second.}
\end{Exm}

\begin{Exm}
\emph{For any finite \G-algebra $\alg{G}$, the almost sure values of the $\{\wedge,\vee,\lnot\}$-fragment of $\mathcal{L}^{\alg{G}}$ are $0,\delta,1,$ where $\delta =\inf (v\vee \lnot v)$, since $\varepsilon =\varepsilon^{\prime}=0$ and $\delta^{\prime}=\inf (\lnot v\vee \lnot \lnot v) = 0$. Notice that $\delta$ is the minimum dense element of $\alg{G}$ and the smallest positive element if $\alg{G}$ is a chain.}
\end{Exm}

\begin{Exm}
\emph{The almost sure values for a finite De Morgan algebra are at most four since $\varepsilon =\varepsilon ^{\prime }$ and\ $\delta =\delta ^{\prime }$. Examples~\ref{ex:FiniteBoolean} and \ref{Lukas} show they may be exactly two, exactly three or exactly four.}
\end{Exm}

\begin{Exm}
\emph{As the conditions of the theorem are equational, they are preserved by finite products; in particular, the reader may verify that the logic associated to the algebra $\G_{3}\times \L_{4}$ has 5 distinct almost sure values: $0,\varepsilon=\varepsilon^{\prime},\delta,\delta^{\prime},1$. We let the reader find an example where six distinct values are achieved.}
\end{Exm}

Notice that no new values are added by $(\mathcal{L}^\alg{A})_{\omega _{1}\omega }^{\omega}$ because under the extension axioms any sentence of this logic is equivalent to a finitary one (see e.g.~\cite[Proposition 3.3.2]{flum}).

\section{A zero-one law for infinitely valued \L ukasiewicz predicate logic}\label{s:01-law-Luk}

Proving zero-one laws for logics given by infinite lattice algebras seems challenging. The translation yields in this case infinitary formulas with essentially infinite vocabularies an it is known that in the case of vocabularies with an infinite supply of unary predicate symbols, the zero-one law for classical  $\mathcal{L}_{\omega_{1}\omega }^{\omega}$ fails~\cite[Exercise 4.1.9.]{flum}. Moreover, we cannot expect positive probabilities $p_{R}(a)$ for all the truth-values (except  for countable $A$).

We will deal mainly with infinitely valued \L ukasiewicz logic, determined by the standard \MV-algebra $[0,1]_\mathrmL=\tuple{[0,1],\wedge ,\vee,\rightarrow,\lnot}$.

Consider $\sigma$-additive Borel probability measures $\{p_{R}\}_{R\in \tau}$ over $[0,1]$ which assign positive measure to all non-trivial intervals\ of $[0,1]$, say, $p_{R}=$ Lebesgue measure.
These induce a product measure $\rho _{n}$ in the infinite space of models with domain $[n]$: $\Mod_{\tau ,n}^{[0,1]}=\prod_{R\in \tau}[0,1]^{[n]^{\tau(R)}}\!$, which permits to define for any measurable set $S \subseteq [0,1]$ the probability that a sentence $\varphi $ takes a value in $S:$
\begin{equation*}
\mu_{n}(\varphi \in S)=\rho _{n}\{\mathfrak{M}\in \Mod_{\tau,n}^{[0,1]}:\varphi ^{\mathfrak{M}}\in S\}
\end{equation*}
(the map $\mathfrak{M}\longmapsto \varphi^{\mathfrak{M}}$ is measurable because it may be shown to be continuous).

It is important to stress that for each predicates $R$ and $R^{\prime}$, and $i_{1},\ldots,i_{k},i_{1}^{\prime},\ldots,i_{k}^{\prime}\in \lbrack n]$, we have that $R(i_{1},\ldots,i_{k})$ and $R^{\prime}(i_{1}^{\prime},\ldots,i_{k}^{\prime})$ become independent events provided that $\tuple{R,i_{1},\ldots,i_{k}}$ and $\tuple{R^{\prime},i_{1}^{\prime },\ldots,i_{k}^{\prime}}$ are distinct. 

For simplicity, we will use the notation $\mathfrak{M}\models \varphi$ as an equivalent of $||\varphi||^\mathfrak{M} = 1$.


\begin{Thm}\label{zeroonelaw1}
For any sentence $\varphi \in \mathcal{L }^{[0,1]_\mathrmL}$, there is a unique $a\in \lbrack
0,1]$ such that
\begin{equation*}
\lim_{n}\mu _{n}(\varphi \in V)=1 \ \ \text{for \ any \ open interval} \ V\text{\ containing\ }a.
\end{equation*}
Equivalently, \begin{equation*}
\lim_{n}\mu _{n}(|\varphi -a|<\varepsilon )=1\text{ \ \ }\emph{\ for\ any}
\text{ }\varepsilon >0.
\end{equation*}
\end{Thm}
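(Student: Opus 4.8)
The plan is to reduce the infinitely valued problem to the finitely valued zero-one law already proved in Theorem~\ref{label}, using the fact that $[0,1]_\mathrmL$ is approximated, in a strong sense, by its finite subalgebras $\mathrmL_{N+1}$. The key observation is that for any fixed $\varphi$ and any target precision $\varepsilon>0$, one can discretize the truth-values into finitely many buckets of width $<\varepsilon$ and control how the continuous semantics relates to a finitely valued semantics on those buckets. Concretely, I would first fix a sentence $\varphi$ and argue that its value $||\varphi||^{\mathfrak{M}}$ is a continuous (indeed piecewise-linear, McNaughton-type) function of the underlying atomic values $R^{\mathfrak{M}}(\bar\imath)\in[0,1]$; this is what makes the map $\mathfrak{M}\mapsto\varphi^{\mathfrak{M}}$ measurable and is flagged parenthetically in the setup.

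Next I would introduce, for each $N$, the rounding or ``nearest grid point'' map from $[0,1]$ onto $\{0,\tfrac1N,\ldots,1\}$ and compare the value of $\varphi$ computed in $[0,1]_\mathrmL$ against its value computed in $\mathrmL_{N+1}$ after rounding the atomic data. The crucial estimate is a \emph{uniform continuity / Lipschitz} bound: because all \L ukasiewicz connectives ($\wedge,\vee,\lnot,\rightarrow$) are $1$-Lipschitz and quantifiers are sup/inf (also $1$-Lipschitz), rounding every atomic value to within $\tfrac1N$ changes $||\varphi||^{\mathfrak{M}}$ by at most something like $\tfrac1N$ times a constant depending only on the syntactic shape of $\varphi$ (in fact by at most $\tfrac1N$, since sup, inf, and all the connectives are nonexpansive). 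Thus for $N$ large enough the continuous value and the rounded finitely valued value differ by less than $\varepsilon/2$ everywhere, deterministically, for \emph{every} model $\mathfrak{M}$.

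Now I would invoke the finitely valued zero-one law. Reading $\varphi$ as a sentence of $\mathcal{L}^{\mathrmL_{N+1}}$ and putting on $\mathrmL_{N+1}$ the probability distribution $p_R^{(N)}$ induced by pushing forward the Lebesgue measure under rounding (which assigns positive mass to each grid value, as required by Theorem~\ref{label}), Theorem~\ref{label} and Corollary~\ref{ob} give a unique almost sure value $a_N\in\mathrmL_{N+1}$ with $\lim_n\mu_n^{(N)}(\varphi=a_N)=1$. Combining with the Lipschitz estimate of the previous step, for large $N$ the continuous value lies within $\varepsilon/2$ of $a_N$ on a set of models of probability tending to $1$, so $\limsup_n\mu_n(|\varphi-a_N|<\varepsilon)=1$. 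To extract a single limit point $a$ I would let $\varepsilon\to 0$ (equivalently $N\to\infty$) and show the sequence $(a_N)$ is Cauchy: if $|\varphi-a_N|<\varepsilon$ and $|\varphi-a_{N'}|<\varepsilon$ each hold almost surely, then $|a_N-a_{N'}|<2\varepsilon$ almost surely, hence (being constants) always, so $(a_N)$ converges to some $a\in[0,1]$; this $a$ is the desired asymptotic value, and uniqueness is immediate since two distinct candidates would force disjoint open intervals each of asymptotic probability $1$.

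The main obstacle I expect is making the Lipschitz/approximation step fully rigorous and uniform across \emph{all} models with domain $[n]$ simultaneously, and in particular handling the interaction between rounding and the infinitary suprema/infima coming from quantifiers: one must be sure that $\sup_i$ of rounded values stays within $\tfrac1N$ of the rounding of $\sup_i$ of the true values, which is true precisely because rounding is monotone and nonexpansive, but it deserves a careful inductive statement. A secondary subtlety is that the pushforward measures $p_R^{(N)}$ genuinely assign positive probability to every grid point (so that Theorem~\ref{label} applies) and that the resulting $a_N$ is independent of the choice of $p_R$ by Corollary~\ref{ob}, which is what lets the $a_N$ stabilize as $N$ grows rather than wandering. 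Once these two points are nailed down, the convergence of $(a_N)$ and the transfer of the zero-one law are routine.
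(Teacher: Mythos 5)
Your strategy is correct, and it is genuinely different from the proof in the paper. The paper builds a continuous analogue of the random-model machinery: it introduces approximate extension axioms $Ext_{A}(k,N,g)$, shows each is almost surely satisfied to within any $\varepsilon$, proves an approximate back-and-forth property ($\varepsilon$-partial isomorphisms and the $2^{\#(\varphi)}\varepsilon$ estimate of Lemma~\ref{continu}), deduces that the theory $Ext$ is complete, and finally extracts the asymptotic value $a$ as the cut determined by $\{r : Ext\models\varphi_{\geq r}\}$ using approximate-consequence compactness of \L ukasiewicz logic. You instead round the atomic diagram to the grid $\mathrmL_{N+1}$, observe that $\mathrmL_{N+1}$ is a subalgebra of $[0,1]_\mathrmL$ so the rounded model's value is literally the $\mathrmL_{N+1}$-semantics applied to data distributed as a product measure with positive mass on each grid point, invoke Theorem~\ref{label} to get $a_N$, and glue via a Cauchy/overlap argument. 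Your reduction is more elementary (no compactness theorem, no back-and-forth, no completeness of $Ext$) and the gluing step is sound: two events of asymptotic probability $1$ must intersect, so $|a_N-a_{N'}|$ is small, the $a_N$ converge, and uniqueness follows from disjoint intervals. What the paper's route buys in exchange is strictly more information --- a complete theory $Ext$ all of whose models assign $\varphi$ its almost sure value (the analogue of the random model), and a proof that extends verbatim to expansions by arbitrary continuous connectives, where no dense finite subalgebra need exist; your route leans on the special fact that the finite \MV-chains are closed under all the \L ukasiewicz operations. (Corollary~\ref{ob} is not really what makes the $a_N$ stabilize, by the way; your overlap argument is, and it suffices.)

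One technical slip to repair: your parenthetical claim that the error is at most $\tfrac1N$ because ``all the connectives are nonexpansive'' is false for the binary \L ukasiewicz connectives. For $\oplus$ (and $\rightarrow$) a perturbation of $\delta$ in each argument can produce a change of $2\delta$ in the output (e.g.\ $0.2\oplus 0.3=0.5$ versus $0.3\oplus 0.4=0.7$), so the error can double at each binary connective; this is exactly the source of the factor $2^{\#(\varphi)}$ in the paper's Lemma~\ref{continu}. Your primary formulation --- error at most $\tfrac1N$ times a constant depending only on the syntactic shape of $\varphi$ --- is the correct one, and since $\varphi$ is fixed it is all you need; the quantifier step is fine because $|\sup_i f(i)-\sup_i g(i)|\leq\sup_i|f(i)-g(i)|$. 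Also, your intermediate conclusion should be a genuine limit, not merely $\limsup_n\mu_n(|\varphi-a_N|<\varepsilon)=1$: the event $\{||\varphi||^{\mathfrak{M}_N}=a_N\}$ is contained in $\{|\varphi-a_N|\leq c_\varphi/N\}$ and its probability tends to $1$, which is what the final $\varepsilon$-argument requires.
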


The two formulations are clearly equivalent. They say that $\varphi $ takes approximately value $a$ with probability $1$, for any degree of approximation.

To prove the theorem, we will need the following observations on the expressivity of the logic (see \cite{bell, chang, mun}). For each fraction $r=\frac{n}{m} \in [0,1]$, there are connectives of \L ukasiewicz logic $($ $)_{\geq r},$ $($ $)_{\leq r}$ such that 

$\model{M} \models  \varphi _{\geq r}$ iff $||\varphi||^{\model{M}}\geq r$,

$\model{M} \models  \varphi _{\geq r}$ iff $||\varphi||^{\model{M}}\leq r$.

\noindent More generally and precisely (see~\cite{cai}), 

$||\varphi_{\geq r}||^{\model{M}}\geq 1-\varepsilon$ iff $||\varphi||^{\model{M}}\geq r-\frac{\varepsilon }{m}$

$||\varphi_{\leq r}||^{\model{M}}\geq 1-\varepsilon$ iff $||\varphi||^{\model{M}}\leq r+\frac{\varepsilon }{m}$ 

\noindent 
We will assume also that the logic $\mathcal{L}^{[0,1]_\mathrmL}$ has a `crisp identity' $\approx $. \footnote{This does not imply loss of generality because any model of $\L$ may be equipped with a crisp identity, without altering the model theoretic properties of the logic.}

Given $k,N\in \omega ,$ let $F_{k+1}$ consist of all the atomic formulas distinct from identities truly containing the variable $x_{k+1},$ and $A_{N}=\{[\frac{j}{N},\frac{j+1}{N}],$ $j=0,\ldots, N-1\}$. For each choice of intervals $g \colon F_{k+1}\rightarrow A_{N}$, define the extension axiom $Ext_{A}(k,N,g)$:

$\forall x_{1}\ldots x_{k}\,(\bigvee _{i\not=j}x_{i}\approx x_{j}\vee \exists x_{k+1}\,\bigwedge _{i\leq k}\lnot x_{k+1}\approx x_{j}\wedge { \bigwedge }_{\varphi \in F_{k+1}}\varphi _{g(\varphi)}(x_{1},\ldots, x_{k},x_{k+1}))$

\noindent Then define the sentence $Ext_{A}(k,N)$:

$\bigwedge_{g \colon F_{k+1}\rightarrow A_{N}}Ext_{A}(k,N,g)$

\noindent and the theory

$Ext:=\{Ext_{A}(k,N)\}_{k,N}$.

Due to the semantical interpretation of quantifiers as infima and suprema, we have that $Ext_{A}(k,N,g)^{M}\geq
1-\varepsilon $ if and only if for any distinct $i_{1},\ldots ,i_{k}$ in $M$
there is an $i$ in $M\setminus \{i_{1},\ldots,i_{k}\}$ such that $||\varphi
_{g(\varphi )}||^{\mathfrak{M}}(i_{1},\ldots ,i_{k},i)\geq 1-\varepsilon $ for each $\varphi \in F_{k+1}$. More precisely, if $g(\varphi )=[\frac{j}{N},\frac{j+1}{N}],$ 
\begin{equation*}
||\varphi ||^\mathfrak{M}(i_{1},\ldots ,i_{k},i)\in \lbrack \frac{j}{N}-\frac{\varepsilon }{m},\frac{j+1}{N}+\frac{\varepsilon }{m}],
\end{equation*}
for each $\varphi \in F_{k+1}$, which is a relaxation of the intended condition $$||\varphi ||^\mathfrak{M}(i_{1},\ldots ,i_{k},i_{k+1})\in \lbrack \frac{j}{N},\frac{j+1}{N}].$$ In the following, we will denote by $g(\varphi )_{\pm \varepsilon}$ the displayed relaxed interval.

\begin{Lem}
For any $\varepsilon >0$, the sequence $\mu_{n}(Ext_{A}(k,N,g)\geq 1-\varepsilon)$ converges to $1$.
\end{Lem}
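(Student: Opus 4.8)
The plan is to prove the lemma by a first-moment/counting argument mirroring the classical proof of Fagin's extension-axiom lemma, but carried out with the relaxed interval condition that the $[0,1]$-valued setting forces on us. Recall that, by the semantic discussion preceding the lemma, $||Ext_{A}(k,N,g)||^{\mathfrak{M}}\geq 1-\varepsilon$ holds precisely when for every tuple of distinct $i_{1},\ldots,i_{k}\in M$ there exists $i\in M\setminus\{i_{1},\ldots,i_{k}\}$ such that $||\varphi||^{\mathfrak{M}}(i_{1},\ldots,i_{k},i)\in g(\varphi)_{\pm\varepsilon}$ for every $\varphi\in F_{k+1}$. So it suffices to show that the probability (under $\rho_{n}$) of the \emph{failure} of this extension property tends to $0$.

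First I would fix a $k$-tuple $\bar{\imath}=(i_{1},\ldots,i_{k})$ of distinct elements of $[n]$ and estimate the probability that \emph{no} suitable witness $i$ exists. For a single candidate witness $i\notin\{i_{1},\ldots,i_{k}\}$, the event ``$||\varphi||^{\mathfrak{M}}(\bar{\imath},i)\in g(\varphi)_{\pm\varepsilon}$ for all $\varphi\in F_{k+1}$'' is determined by the values the model assigns to the finitely many atomic formulas in $F_{k+1}$ evaluated at $(\bar{\imath},i)$; since each $g(\varphi)_{\pm\varepsilon}$ is an interval of positive length and $p_{R}$ assigns positive measure to every non-trivial interval, this event has some fixed probability $q=q(\varepsilon,N,k)>0$, independent of $n$. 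Crucially, for two distinct candidate witnesses $i\neq i'$ the relevant atomic events involve distinct tuples $\langle R,\bar{\imath},i\rangle$ and $\langle R,\bar{\imath},i'\rangle$, hence by the independence of atomic events noted in the text they are mutually independent. Therefore the probability that the extension fails for \emph{this} fixed $\bar{\imath}$ is at most $(1-q)^{n-k}$, since there are at least $n-k$ disjoint candidate witnesses.

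Next I would apply a union bound over all $k$-tuples $\bar{\imath}$: there are at most $n^{k}$ of them, so
\begin{equation*}
\mu_{n}\bigl(||Ext_{A}(k,N,g)||^{\mathfrak{M}}<1-\varepsilon\bigr)\leq n^{k}(1-q)^{n-k}.
\end{equation*}
Since $q>0$ is a constant not depending on $n$, the factor $(1-q)^{n-k}$ decays exponentially and dominates the polynomial $n^{k}$, so the right-hand side tends to $0$ as $n\to\infty$. Consequently $\mu_{n}(Ext_{A}(k,N,g)\geq 1-\varepsilon)\to 1$, which is the claim.

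The main obstacle, and the one point requiring genuine care rather than routine estimation, is justifying that the single-witness success probability $q$ is strictly positive and bounded away from $0$ uniformly in $n$. This rests on two facts that must be invoked precisely: that the relaxation $g(\varphi)_{\pm\varepsilon}=[\tfrac{j}{N}-\tfrac{\varepsilon}{m},\tfrac{j+1}{N}+\tfrac{\varepsilon}{m}]$ is an interval of positive length (so $p_{R}$ gives it positive mass), and that the joint event over all $\varphi\in F_{k+1}$ is a product of finitely many positive-probability independent atomic events (again using that distinct atomic formulas truly containing $x_{k+1}$ correspond to distinct tuples and are therefore independent). One should also check that the value $||\varphi||^{\mathfrak{M}}(\bar{\imath},i)$ of a compound atomic-free formula lands in the prescribed interval with positive probability; but since the formulas $\varphi\in F_{k+1}$ are taken to be atomic, this reduces directly to the measure-positivity of $p_{R}$ on intervals and needs no further argument.
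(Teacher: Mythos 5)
Your proposal is correct and follows essentially the same route as the paper: a union bound over the $n^{k}$ choices of $\bar{\imath}$, independence of the witness events across distinct candidates $i$ (and across the atomic formulas in $F_{k+1}$, which truly contain $x_{k+1}$ and hence yield distinct atomic tuples), and a lower bound on the single-witness success probability that is positive and independent of $n$ — the paper makes this explicit by bounding $p_{R}(g(\varphi)_{\pm\varepsilon})\geq p_{R}([\tfrac{j}{N},\tfrac{j+1}{N}])$ and taking $\delta_{N}$ as the minimum over $R$ and $j$, which plays the role of your $q$. No substantive differences.
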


\begin{proof}
Fix $k,N,$ $g \colon F_{k+1}\rightarrow A_{N}$ and notice
that according to the previous observation $Ext_{A}(k,g,N)^{M}<1-\varepsilon 
$ means that there are distinct $i_{1},\ldots ,i_{k}$ in $[n]$ such
that:

for all $i\in \lbrack n]\setminus \{i_{1},\ldots,i_{k}\}$, it is false
that $\bigwedge\nolimits_{\varphi \in F_{k+1}}||\varphi ||^{\mathfrak{M}}(i_{1},\ldots
,i_{k},i)\in g(\varphi)_{\pm \varepsilon}$.

\noindent Let $S(M,\varepsilon ,i_{1},\ldots ,i_{k})$ be the above classical
statement for fixed $\varepsilon $ and $i_{1},\ldots ,i_{k}$, and $\mathfrak{M}$ chosen
randomly in $\Mod_{\tau,n}^{[0,1]}$. Then, since the set of events

\begin{equation*}
\{||\varphi ||^{\mathfrak{M}}(i_{1},\ldots ,i_{k},i)\in g(\varphi )_{\pm \varepsilon
}:\varphi \in F_{k+1},\text{ }i\in \lbrack n]\setminus \{i_{1},\ldots
,i_{k}\}\}
\end{equation*}
is independent,

$\mu_n(S(M,\varepsilon ,i_{1},\ldots ,i_{k}))=\prod\nolimits_{i\in
\lbrack n]\setminus \{i_{1},\ldots ,i_{k}\}}(1-\prod\nolimits_{\varphi
\in F_{k+1}}\mu_n (\varphi(i_{1},\ldots ,i_{k},i)\in g(\varphi )_{\pm
\varepsilon }))$ 

\noindent But $\mu_n (\varphi(i_{1},\ldots ,i_{k},i)\in g(\varphi)_{\pm \varepsilon })=p_{R}(g(\varphi )_{\pm \varepsilon })\geq p_{R}([\frac{j}{N},\frac{j+1}{N}])>0,$ for some relation symbol $R$ and some $j$. Then, taking $\delta _{N}=\min \{p_{R}([\frac{j}{N},\frac{j+1}{N}]):R\in \tau ,$ $0\leq j\leq N-1\},$ we have 
\begin{equation*}
\mu _{n}(S(M,\varepsilon ,i_{1},\ldots ,i_{k}))\leq (1-\delta _{N})^{n-k},
\end{equation*}
Therefore,

$\mu _{n}(Ext_{A}(k,g,N)<1-\varepsilon )\leq \sum_{i_{1},\ldots
,i_{k,}}\mu _{n}(S(M,\varepsilon ,i_{1},\ldots ,i_{k}))\leq n^{k}(1-\delta
_{N})^{n-k}$

\noindent which clearly converges to $0,$ since $1-\delta _{N}<1$.
Therefore, $\mu _{n}(Ext_{A}(k,g,N)\geq 1-\varepsilon )$ converges to $1$.
\end{proof}

Recall that $Ext_{A}(k,N,g)^{\mathfrak{M}}\geq 1-\varepsilon $ iff $M\models Ext_{A}(k,N,g)_{\geq 1-\varepsilon }$. Therefore, the previous lemma shows that any finite subset of the theory $Ext^{\ast }=\{\psi _{\geq 1-\varepsilon }:\psi \in Ext,$ $\varepsilon >0)$ is satisfiable. Hence, by satisfiability compactness of \L ukasiewicz logic it has a model, which will be necessarily a model of $Ext$. As $Ext$ is countable, this model may be chosen to be countable by the downward L\"{o}wenheim--Skolem theorem.

\noindent \begin{Def}\emph{Let \emph{\ }$\overline{a},\overline{b}$ be tuples of the same length $k$ of distinct elements of $\mathfrak{A}$ and $\mathfrak{B},$ respectively, then $\overline{a}\sim _{\varepsilon }\overline{b}$ if and only if $|\varphi ^{\mathfrak{A}}(\overline{a})-\varphi ^{\mathfrak{B}}(\overline{b})|\leq \varepsilon $ for any atomic $\varphi $ with variables in $\{x_{1},\ldots,x_{k}\}$. We say then that the pair $\overline{a},\overline{b}$ (or the assignment $a_{i}\mapsto b_{i})$ is an $\varepsilon $-\emph{partial isomorphism}.}
\end{Def} 

Considering tuples enumerating $\mathfrak{A}$ and $\mathfrak{B}$ respectively, we may define, similarly, the notion of $\varepsilon$-isomorphism. Partial $\varepsilon$-isomorphisms between models of $Ext$ have the extension property, more precisely.

\begin{Lem}\label{extension} If\/ $\mathfrak{A},\mathfrak{B}\models Ext(k,N)$, $N\geq \frac{2}{\varepsilon}$,  then $\overline{a}\sim _{\varepsilon }\overline{b}$ (of length $\leq k)$ implies for any $a\in A\setminus \{\overline{a}\}$
there is $b\in B\setminus \{\overline{b}\}$ such that $\overline{a}a\sim _{\varepsilon }\overline{bb}$ and viceversa. 
Particular case: for any $a\in A$, there is $b\in B$ such that $a\sim_{\varepsilon} b$.
\end{Lem}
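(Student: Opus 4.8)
The plan is to read off the ``local type'' of the new point $a$ over $\overline{a}$ as a choice of dyadic intervals, and then invoke the extension axiom of the matching arity to produce a companion point $b$ in $\mathfrak{B}$ realizing the same type up to a controlled error. First I would reduce the goal to the atomic formulas that genuinely involve the new variable. Write $\ell=|\overline{a}|=|\overline{b}|\le k$. An atomic formula $\psi$ with variables in $\{x_{1},\ldots,x_{\ell+1}\}$ falls into three types: if $\psi$ omits $x_{\ell+1}$, then $|\psi^{\mathfrak{A}}(\overline{a})-\psi^{\mathfrak{B}}(\overline{b})|\le\varepsilon$ already holds by $\overline{a}\sim_{\varepsilon}\overline{b}$; if $\psi$ is an identity involving $x_{\ell+1}$, then since $a\notin\{\overline{a}\}$ is given and I will choose $b\notin\{\overline{b}\}$, crisp identity makes every such $\psi$ equal to $0$ (or, for $x_{\ell+1}\approx x_{\ell+1}$, equal to $1$) in both models, so they agree exactly; the only remaining case is $\psi\in F_{\ell+1}$, the non-identity atomic formulas truly containing $x_{\ell+1}$. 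So it suffices to approximate the values $\varphi^{\mathfrak{A}}(\overline{a},a)$ for $\varphi\in F_{\ell+1}$.

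Next I would fix, for each $\varphi\in F_{\ell+1}$, the interval $g(\varphi)=[\tfrac{j}{N},\tfrac{j+1}{N}]\in A_{N}$ with $\varphi^{\mathfrak{A}}(\overline{a},a)\in g(\varphi)$, thereby defining $g\colon F_{\ell+1}\to A_{N}$. Since $\overline{b}$ consists of $\ell\le k$ distinct elements and $\mathfrak{B}\models Ext_{A}(\ell,N,g)$ (a conjunct of $Ext(k,N)$), the semantic reading of the quantifiers recalled before the lemma applies: as $Ext_{A}(\ell,N,g)^{\mathfrak{B}}=1\ge 1-\varepsilon'$ for every $\varepsilon'>0$, for each such $\varepsilon'$ there is $b\in B\setminus\{\overline{b}\}$ with $\varphi^{\mathfrak{B}}(\overline{b},b)\in g(\varphi)_{\pm\varepsilon'}$ simultaneously for all $\varphi\in F_{\ell+1}$ (the single witness $b$ serves all conjuncts, since each conjunct of the inner conjunction must itself reach value $\ge 1-\varepsilon'$).

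The hard part will be that the existential quantifier is a supremum which need not be attained, so the axiom does not deliver a $b$ landing in the closed intervals $g(\varphi)$ themselves, only in their $\varepsilon'$-relaxations; this is exactly what the hypothesis $N\ge 2/\varepsilon$ is designed to absorb. I would take $\varepsilon'=\varepsilon/2$ and obtain the corresponding $b$. Then, for each $\varphi\in F_{\ell+1}$ with $g(\varphi)=[\tfrac{j}{N},\tfrac{j+1}{N}]$ and $m$ the denominator of $\tfrac{j}{N}$, we have $\varphi^{\mathfrak{A}}(\overline{a},a)\in[\tfrac{j}{N},\tfrac{j+1}{N}]$ while $\varphi^{\mathfrak{B}}(\overline{b},b)\in[\tfrac{j}{N}-\tfrac{\varepsilon'}{m},\tfrac{j+1}{N}+\tfrac{\varepsilon'}{m}]$, whence
\[
|\varphi^{\mathfrak{A}}(\overline{a},a)-\varphi^{\mathfrak{B}}(\overline{b},b)|\ \le\ \tfrac1N+\tfrac{\varepsilon'}{m}\ \le\ \tfrac{\varepsilon}{2}+\tfrac{\varepsilon}{2}=\varepsilon,
\]
using $\tfrac1N\le\tfrac{\varepsilon}{2}$ and $m\ge1$. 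Combined with the first-paragraph reductions this yields $\overline{a}a\sim_{\varepsilon}\overline{b}b$. The converse direction is entirely symmetric, applying $Ext_{A}(\ell,N,g')$ in $\mathfrak{A}$ to the type $g'$ of $b$ over $\overline{b}$; and the stated particular case is the instance $\ell=0$, where $\overline{a},\overline{b}$ are empty and the hypothesis $\overline{a}\sim_{\varepsilon}\overline{b}$ is vacuous.
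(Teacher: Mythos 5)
Your proposal is correct and follows essentially the same route as the paper: read off the $A_N$-intervals of the atomic values of $(\overline{a},a)$ to define $g$, apply the extension axiom $Ext_A(\ell,N,g)$ in $\mathfrak{B}$ at tolerance $\varepsilon/2$, and bound the discrepancy by $\frac{1}{N}+\frac{\varepsilon}{2}\leq\varepsilon$. Your write-up is in fact slightly more careful than the paper's on two points it leaves implicit — the reduction to formulas in $F_{\ell+1}$ (identities and $x_{\ell+1}$-free atoms) and the fact that the existential supremum need not be attained, so one only gets the $\varepsilon'$-relaxed intervals.
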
 

\begin{proof}
Assume $\overline{a}\sim _{\varepsilon }\overline{b},$ and consider $g \colon F_{k+1}\rightarrow A_{N}$ such that $\varphi ^{\mathfrak{A}}(\overline{a},a)\in g(\varphi )$ for each $\varphi \in F_{k+1}$, then $Ext(k,N,g)$ is an extension axiom which must be satisfied by $\mathfrak{B};$ hence, there is $b$ such that $||\varphi _{g(\varphi )}||^{\mathfrak{B}}(\overline{b},b)\geq 1-\varepsilon /2$. This means that $\varphi ^{\mathfrak{B}}(\overline{b},b)\in \lbrack \frac{j}{N}-\varepsilon/2,\frac{j+1}{N}+\varepsilon/2],$ thus $|\varphi^{\mathfrak{A}}(\overline{a},a)-\varphi ^{\mathfrak{B}}(\overline{b},b)|\leq \frac{1}{N}+\frac{\varepsilon}{2}\leq \varepsilon|$.
\end{proof}

We will utilize the following observation:\ $|r\oplus s-r^{\prime}\oplus s^{\prime}|\leq 2\max \{|r-r^{\prime }|,|s-s^{\prime }|\}$. For any formula $\varphi ,$ let $\#(\varphi )$ be the number of occurrences of $\oplus \ $in\ $\varphi $. 

\begin{Lem}\label{continu} If\/ $\mathfrak{A},\mathfrak{B}\models Ext(k,N)$\emph{, }$N\geq \frac{2}{\varepsilon },$ then for any formula $\varphi $ with free variables in $\{x_{1},\ldots,x_{k}\}$, \emph{\ }$\overline{a}\sim _{\varepsilon }\overline{b}$  implies $|\varphi^{\mathfrak{A}}(\overline{a})-\varphi ^{\mathfrak{B}}(\overline{b})|\leq 2^{\#(\varphi )}\varepsilon$.\end{Lem}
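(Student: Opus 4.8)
The plan is to prove this by induction on the structure of the formula $\varphi$, using the $\varepsilon$-partial isomorphism hypothesis $\overline{a}\sim_\varepsilon\overline{b}$ as the base case and tracking how the constant $2^{\#(\varphi)}$ grows as connectives and quantifiers are applied. First I would treat the atomic case: if $\varphi$ is atomic then by definition of $\sim_\varepsilon$ we have directly $|\varphi^{\mathfrak{A}}(\overline{a})-\varphi^{\mathfrak{B}}(\overline{b})|\leq\varepsilon=2^{0}\varepsilon=2^{\#(\varphi)}\varepsilon$, since an atomic formula contains no occurrences of $\oplus$.

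For the inductive step I would reduce everything to a fixed functionally complete (or at least generating) set of connectives for \L ukasiewicz logic. The natural choice here, given the way $\#$ counts occurrences of $\oplus$, is to express $\varphi$ using $\lnot$ and $\oplus$ (from which $\rightarrow$, $\odot$, $\wedge$, $\vee$ are all definable). The key propagation estimates are: for negation, $|\lnot r-\lnot r'|=|r-r'|$, so $\lnot$ does not increase the error and indeed $\#(\lnot\varphi)=\#(\varphi)$; for the lattice connectives $\wedge=\min$ and $\vee=\max$, one has the $1$-Lipschitz bound $|\min(r,s)-\min(r',s')|\leq\max\{|r-r'|,|s-s'|\}$ (and similarly for $\max$), which again does not enlarge the constant beyond the maximum of the two subformula bounds; and for $\oplus$, the stated observation $|r\oplus s-r'\oplus s'|\leq 2\max\{|r-r'|,|s-s'|\}$ contributes the factor of $2$ that accounts exactly for the single new $\oplus$, matching the additive behaviour $\#(\varphi_1\oplus\varphi_2)=\#(\varphi_1)+\#(\varphi_2)+1$. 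Combining these with the inductive hypotheses $|\varphi_i^{\mathfrak{A}}(\overline{a})-\varphi_i^{\mathfrak{B}}(\overline{b})|\leq 2^{\#(\varphi_i)}\varepsilon$ yields the desired bound in each connective case.

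The quantifier case is where I expect the main obstacle to lie, and it is precisely where Lemma~\ref{extension} is needed. Suppose the claim holds for $\varphi(x_1,\ldots,x_k,x_{k+1})$ and consider $\exists x_{k+1}\,\varphi$; its value is the supremum over witnesses, $\varphi^{\mathfrak{A}}(\overline{a})=\sup_{a\in A}\varphi^{\mathfrak{A}}(\overline{a},a)$, and likewise on $\mathfrak{B}$. Given $\overline{a}\sim_\varepsilon\overline{b}$, Lemma~\ref{extension} provides, for any witness $a$ on the $\mathfrak{A}$ side, a matching $b$ on the $\mathfrak{B}$ side with $\overline{a}a\sim_\varepsilon\overline{b}b$ (and symmetrically), so the inductive hypothesis gives $|\varphi^{\mathfrak{A}}(\overline{a},a)-\varphi^{\mathfrak{B}}(\overline{b},b)|\leq 2^{\#(\varphi)}\varepsilon$ for these paired witnesses. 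A standard sup/inf comparison argument then transfers this pointwise bound to the suprema (and to the infima for $\forall$), using that $|\sup_a f(a)-\sup_b g(b)|$ is controlled by the uniform matching error when every element on each side has a partner within that error; since quantification adds no $\oplus$, we have $\#(\exists x_{k+1}\,\varphi)=\#(\varphi)$ and the constant is unchanged. The delicate point to state carefully is that $\sim_\varepsilon$ is defined only for tuples of \emph{distinct} elements, so one must handle the case $a\in\{\overline{a}\}$ separately (the witness collapses onto an existing coordinate, which is covered by the disjunct $\bigvee_{i\neq j}x_i\approx x_j$ in the extension axiom), ensuring the extension property of Lemma~\ref{extension} genuinely applies to supply a fresh witness $b\notin\{\overline{b}\}$ when needed.
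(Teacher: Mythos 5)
Your proposal is correct and follows essentially the same route as the paper's own proof: induction on complexity over $\lnot$, $\oplus$, and $\exists$, with the atomic case handled by the definition of $\sim_{\varepsilon}$, the factor of $2$ per $\oplus$ coming from the stated observation $|r\oplus s-r'\oplus s'|\leq 2\max\{|r-r'|,|s-s'|\}$ together with $2\cdot 2^{\max(\#\theta,\#\theta')}\leq 2^{\#\theta+\#\theta'+1}$, and the quantifier step using Lemma~\ref{extension} to pair witnesses and then comparing suprema. Your remark about witnesses that collapse onto an existing coordinate of $\overline{a}$ is a detail the paper's proof silently glosses over, and it is handled exactly as you indicate.
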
 

\begin{proof}
By induction on the complexity of $\varphi (\overline{x})\in \L\forall (\tau),$ based in $\oplus ,$ $\lnot ,$ $\exists$.

\begin{itemize}

\item If $\varphi$ is atomic, then the result is immediate by definition of partial $\varepsilon$-isomorphism.

\item If $\varphi $ is $\lnot \theta $, then it is trivial because $|(\lnot \theta )^{\mathfrak{A}}[\overline{a}]-(\lnot \theta )^{\mathfrak{B}}[\overline{b}]|=|\theta ^{\mathfrak{A}}[\overline{a}]-\theta ^{\mathfrak{B}}[\overline{b}]|.$

\item If $\varphi $ is $\theta \oplus \theta ^{\prime }$, then, by induction hypothesis, $|\theta ^{\mathfrak{A}}[\overline{a}]-\theta ^{\mathfrak{B}}[\overline{b}]|\leq 2^{\#(\theta )}\varepsilon ,$ $|\theta ^{\prime \mathfrak{A}}[\overline{a}]-\theta ^{\prime \mathfrak{B}}[\overline{b}]|\leq 2^{\#(\theta ^{\prime })}\varepsilon $ and

$|(\theta \oplus \theta ^{\prime })^{\mathfrak{A}}[\overline{a}]-(\theta \oplus \theta ^{\prime })^{\mathfrak{B}}[\overline{b}]|\leq 2\cdot 2^{\max(\#(\theta ),\#(\theta ^{\prime }))}\varepsilon \leq 2^{\#(\theta)+\#(\theta ^{\prime })+1}\varepsilon =2^{\#(\varphi )}\varepsilon .$

\item If $\varphi $ is $\exists v\,\theta$ and $\overline{a}\sim _{\varepsilon }\overline{b}$, then, by the previous lemma, for each $a\in A$ there is $b_{a}\in B$ such that $\overline{a}a\sim _{\varepsilon }\overline{b}b_{a},$ and the inductive hypothesis yields $|\theta ^{\mathfrak{A}}[\overline{a}a]-\theta ^{\mathfrak{B}}[\overline{b}b_{a}]|\leq 2^{\#(\theta )}\varepsilon.$ Hence, $\theta ^{\mathfrak{A}}[\overline{a}a]\leq \theta^{\mathfrak{B}}[\overline{b}b_{a}]+2^{\#(\theta )}\varepsilon$. Taking suprema over $a$, we obtain $(\exists v\,\theta )^{\mathfrak{A}}[\overline{a}]\leq\sup_{b_{a}}\theta^{\mathfrak{B}}[\overline{b}b_{a}]+2^{\#(\theta)}\varepsilon \leq (\exists v\,\theta)^\mathfrak{B}+2^{\#(\theta)}\varepsilon [\overline{b}],$ and thus $(\exists v\,\theta )^{\mathfrak{A}}[\overline{a}]-(\exists v\,\theta )^\mathfrak{B}[\overline{b}]\leq 2^{\#(\theta )}\varepsilon .$ Symmetrically, we get the same for the negation and thus $|(\exists v\,\theta)^{\mathfrak{A}}[\overline{b}]-(\exists v\theta )^{B}[\overline{a}]|\leq 2^{\#(\theta)}\varepsilon =2^{\#(\varphi )}\varepsilon$.\qedhere
\end{itemize}
\end{proof}  

We will write $\mathfrak{A}\equiv_{\mathcal{L }^{[0,1]_\mathrmL}}\mathfrak{B}$ whenever $\varphi ^{\mathfrak{A}}=\varphi ^{\mathfrak{B}}$ for each sentence $\varphi$.

\begin{Cor}[Completeness] If\/ $\mathfrak{A},\mathfrak{B}\models Ext$, then $\mathfrak{A}\equiv _{\mathcal{L}^{[0,1]_\mathrmL}}\mathfrak{B}$. Therefore, $Ext$ is complete.
\end{Cor}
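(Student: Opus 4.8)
The goal is to show that any two models $\mathfrak{A},\mathfrak{B}\models Ext$ agree on all sentences, i.e.\ $\varphi^{\mathfrak{A}}=\varphi^{\mathfrak{B}}$ for every sentence $\varphi$. The plan is to combine the two preceding lemmas into a limiting argument. First I would observe that $Ext$ contains $Ext(k,N)$ for \emph{all} $k,N\in\omega$; in particular, for any prescribed $\varepsilon>0$ and any $k$, by choosing $N\geq 2/\varepsilon$ both models satisfy the hypothesis $\mathfrak{A},\mathfrak{B}\models Ext(k,N)$ required by Lemmas~\ref{extension} and \ref{continu}.

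Next, fix an arbitrary sentence $\varphi$ and an arbitrary $\eta>0$. Since $\varphi$ is a sentence, its free-variable tuple is empty, so the empty assignment is vacuously an $\varepsilon$-partial isomorphism: the condition $\overline{a}\sim_{\varepsilon}\overline{b}$ for length-$0$ tuples is the empty conjunction and holds trivially (there are no atomic formulas with no free variables distinct from identities to constrain). I would then invoke Lemma~\ref{continu} with these empty tuples: for any $\varepsilon>0$, choosing $N\geq 2/\varepsilon$ so that both models satisfy $Ext(k,N)$ for the relevant $k$ (here $k=0$, or more safely any $k$ at least the number of bound variables of $\varphi$, since the extension lemma is applied along the nested quantifiers of $\varphi$), the lemma yields
\begin{equation*}
|\varphi^{\mathfrak{A}}-\varphi^{\mathfrak{B}}|\leq 2^{\#(\varphi)}\varepsilon.
\end{equation*}

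The final step is to let $\varepsilon\to 0$. The quantity $2^{\#(\varphi)}$ is a fixed constant depending only on the (fixed) sentence $\varphi$ and not on $\varepsilon$ or $N$, so given any $\eta>0$ I can choose $\varepsilon<\eta/2^{\#(\varphi)}$ and a correspondingly large $N\geq 2/\varepsilon$ for which the bound applies, giving $|\varphi^{\mathfrak{A}}-\varphi^{\mathfrak{B}}|<\eta$. As $\eta>0$ was arbitrary, $\varphi^{\mathfrak{A}}=\varphi^{\mathfrak{B}}$, which is exactly $\mathfrak{A}\equiv_{\mathcal{L}^{[0,1]_\mathrmL}}\mathfrak{B}$. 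The completeness of $Ext$ then follows immediately: since a model of $Ext$ exists (established just before the Definition via satisfiability compactness and L\"owenheim--Skolem), and all models of $Ext$ assign the same value to every sentence, the theory decides the value of each sentence.

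The only genuinely delicate point I expect is the bookkeeping around which $k$ to use when applying the extension property inside Lemma~\ref{continu}: the induction in that lemma applies Lemma~\ref{extension} once per quantifier as it descends through $\varphi$, so one must ensure $\mathfrak{A},\mathfrak{B}\models Ext(k,N)$ for $k$ as large as the quantifier nesting depth of $\varphi$. This is harmless because $Ext$ includes $Ext(k,N)$ for \emph{every} $k$, but it is worth stating explicitly so the application of Lemma~\ref{continu} is licensed. Everything else is a routine $\varepsilon$--$N$ chase with a constant $2^{\#(\varphi)}$ that, crucially, is independent of the approximation parameter.
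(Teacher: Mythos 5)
Your proof is correct and follows essentially the same route as the paper: fix $\varepsilon$, take $N$ large enough that Lemma~\ref{continu} applies with parameter $\varepsilon/2^{\#(\varphi)}$, and let $\varepsilon\to 0$. The only cosmetic difference is that you start from the (vacuously valid) empty partial isomorphism, whereas the paper first uses the particular case of Lemma~\ref{extension} to produce a one-element pair $a\sim_{\varepsilon/2^{\#(\varphi)}}b$ and evaluates $\varphi$ there; both are licensed since a sentence trivially has its free variables among $x_1,\dots,x_k$, and your remark about needing $Ext(k,N)$ for $k$ up to the number of variables of $\varphi$ matches the paper's choice of $k$.
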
 

\begin{proof} Given a sentence $\varphi $ and $\varepsilon >0,$ let $x_{1},\ldots,x_{k}$ be its variables and $N\geq 2\cdot 2^{\#(\varphi)}/\varepsilon$. Then, by Lemma~\ref{extension}, there are $a\in A,$ $b\in B$ such that $a\sim _{\varepsilon /2^{\#(\varphi )}}b$ and, by Lemma~\ref{continu}, $|\varphi ^{\mathfrak{A}}-\varphi ^{\mathfrak{B}}|=|\varphi ^{\mathfrak{A}}[a]-\varphi ^{\mathfrak{B}}[b]|\leq 2^{\#(\varphi )}\varepsilon /2^{\#(\varphi )}=\varepsilon$. Since this is true for all $\varepsilon >0$, we have $\varphi ^{\mathfrak{A}}=\varphi ^{\mathfrak{B}}$.
\end{proof}

It is easy to see that a theory $T$ is complete iff for any sentence $\varphi $ and rational $r$, we have $T\models \varphi _{\geq r}$ or $T\models \varphi _{\leq r}.$ 

\begin{proof}(of Theorem~\ref{zeroonelaw1}) Fix a sentence $\varphi$. By completeness, one has $Ext\models \varphi _{\geq r}$ or $Ext\models \varphi_{\leq r}\,$ for each rational $r\in \lbrack 0,1]$. Then, the sets $L=\{r\in \mathbb{Q}:Ext\models \varphi _{\geq r}\},$ $U=\{s\in \mathbb{Q}:Ext\models \varphi _{\leq s}\}$ are easily seen to determine a real cut $a\in\lbrack 0,1]$. Assume first $a\in (0,1)$ and let $a\in (r,s)$ and  $r<r^{\prime }<a<s^{\prime }<s$. Then, $Ext\models \varphi _{\geq r^{\prime}}\wedge \varphi _{\leq s^{\prime }}$ by construction. Take a positive $\varepsilon <s-s^{\prime},rs^{\prime}-r$. By approximate consequence compactness of \L, there is a subset $Ext^{\ast}\subseteq _{fin}Ext$ such that $Ext^{\ast }\models (\varphi _{\geq r^{\prime }}\wedge \varphi _{\leq s^{\prime }})_{\geq1-\varepsilon }$ , thus $Ext^{\ast }\models \varphi _{\lbrack r^{\prime }-\varepsilon ,\text{ }s^{\prime }+\varepsilon ]}$. Since $\lim_n \mu _{n}(Ext^{\ast})= 1$, we have $\lim_n \mu _{n}(\varphi \in \lbrack r^{\prime }-\varepsilon,s^{\prime }+\varepsilon ])= 1$ and, a fortiori, $\lim_n\mu_{n}(\varphi \in (r,s))= 1$. This shows the existence of $a$. Uniqueness follows easily since $b\not=a$ implies the existence of rationals $u,v,$\ $r$ such that $u<b<r<a<s.$ As $\varphi \in (r,s)$ has asymptotic probability $1$, the incompatible event $\varphi \in (u,r)$ must have asymptotic probability $0$.

If $a\in \lbrack 0,s)$, choose $a<s^{\prime }<s$. Then, $Ext\models \varphi _{\leq s^{\prime}}$ and $Ext^{\ast }\models (\varphi _{\leq s^{\prime }})_{1-\varepsilon }$ for an $\varepsilon <s-s^{\prime }$, and hence $Ext^{\ast }\models \varphi _{\lbrack 0,s^{\prime }+\varepsilon ]}$. Since $\lim_n\mu_{n}(Ext^{\ast })= 1$, we have $\lim_n\mu _{n}(\varphi \in \lbrack 0,s^{\prime }+\varepsilon ])= 1$, and, a fortiori, $\lim_n \mu_{n}(\varphi \in \lbrack 0,s))= 1$. The case $a=1$ is similar.
\end{proof}

\begin{Rmk} \emph{The inductive step in Lemma~\ref{continu} works without extra effort for any continuous connective $\circ$ under the rank definition: $\#(\circ(\theta_{1},\ldots,$ $\theta _{k}))=\max_{i}\#(\theta_{i})+1$. So $Ext$ is complete for any expansion of \L ukasiewicz logic by continuous connectives, and thus for full continuous logic, and the same is true of the zero-one law.}\end{Rmk} 

\begin{Rmk} \emph{From lemmas~\ref{extension} and~\ref{continu}, one may show that, for any countable $\mathfrak{A},\mathfrak{B}\models Ext$ and $\varepsilon >0$, there are enumerations $a_{0},a_{1},\ldots$ of $A$ and $b_{0},b_{1},\ldots$ of $B$ such that $a_{i}\mapsto b_{i}$ is an $\varepsilon$-isomorphism. This enumerations depend on $\varepsilon$ and it is not possible to obtain $\omega $-categoricity in this way. However, it follows from the above proof  that any sentence $\phi$ takes in all models of $Ext$ the same value, precisely its almost sure truth-value. }\end{Rmk}

\section{Further results for infinitely valued predicate logics}\label{s:Further-infinitely}

In this section, although we cannot expect a complete description of the infinitely valued case, we still may obtain some results generalizing those from in Section~\ref{s:SetAlmostSure} for the finitely valued case.  That is, we consider again the problem of describing sets of almost sure values, now for infinitely valued logics. In particular, we obtain that all rational numbers in $[0,1]$ are almost sure values of infinitely valued \L ukasiewicz predicate logic and, after that, we prove a zero-one law for a large family of $[0,1]$-valued logics satisfying De Morgan laws.

Observe that the analogue of Lemma~\ref{al} holds for infinitely valued \L ukasiewicz logic with a similar proof, due to the continuity of McNaughton functions.

\begin{Thm}\label{t:AlmostSureLuk}
Let $t(v_{1},\ldots,v_{k})$ be a term in the signature of\/ \MV-algebras. Then, $m=\inf_{a_{1},\ldots,a_{k}\in A}t^\alg{A}(a_{1},\ldots,a_{k})$ and $s=\sup_{a_{1},\ldots,a_{k}\in A}t^\alg{A}(a_{1},\ldots,a_{k})$ are almost sure for $[0,1]_\mathrmL$.
\end{Thm}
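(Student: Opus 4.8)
The plan is to mimic the proof of Lemma~\ref{al}, replacing the finite algebra with $[0,1]_\mathrmL$ and the exact-value events with the small-interval events that are natural in the continuous setting. The crucial new ingredient is continuity: a term $t(v_1,\ldots,v_k)$ in the signature of \MV-algebras induces a McNaughton function, which is continuous on the compact cube $[0,1]^k$, so the infimum $m$ and supremum $s$ are actually \emph{attained}. Fix a point $\tuple{a_1,\ldots,a_k}\in[0,1]^k$ at which $t$ attains its minimum $m$. By continuity, for every $\eta>0$ there is a $\delta>0$ such that whenever $|b_i-a_i|<\delta$ for all $i$, we have $|t^{[0,1]_\mathrmL}(b_1,\ldots,b_k)-m|<\eta$; moreover, since the values $a_i$ can always be perturbed into the interior, we may assume each $a_i$ lies in a non-degenerate interval $I_i=(a_i-\delta,a_i+\delta)\cap[0,1]$ of positive Lebesgue measure $p_R(I_i)>0$.

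First I would show that $\lim_n\mu_n(\forall x_1\ldots\forall x_k\,t(R(x_1),\ldots,R(x_k))\in(m-\eta,m))=1$ for every $\eta>0$, which establishes $m$ as the almost sure value in the sense of Theorem~\ref{zeroonelaw1}. Because $t$ attains its global minimum $m$ on the cube, we always have $t(R(i_1),\ldots,R(i_k))\geq m$, so the value of the universally quantified formula (an infimum) never drops below $m$; hence only the upper bound $m-\eta<{}$value needs work. Exactly as in Lemma~\ref{al}, for each $n\geq k$ select a set $K\subseteq[n]^k$ of size $\lfloor n/k\rfloor$ consisting of tuples with pairwise-distinct coordinates across \emph{all} tuples, so that the atomic events attached to distinct tuples in $K$ are independent. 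The probability that for \emph{some} tuple $\tuple{i_1,\ldots,i_k}\in K$ we have $R(i_j)\in I_j$ for all $j$ is at least $1-(1-p_R(I_1)\cdots p_R(I_k))^{\lfloor n/k\rfloor}$, which converges to $1$ since each $p_R(I_j)>0$. On that event the infimum defining the universal formula is squeezed into $[m,m+\eta)$, so the formula takes a value within $(m-\eta,m+\eta)$ with probability tending to $1$, witnessing $m$ as the almost sure value. The dual argument with the existential quantifier, a point attaining the maximum $s$, and the continuity estimate in the other direction shows that $s$ is the almost sure value of $\exists x_1\ldots\exists x_k\,t(R(x_1),\ldots,R(x_k))$.

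The main obstacle, relative to the finitely valued case, is that in Lemma~\ref{al} one conditions on the exact atomic events $R(i_j)=a_j$, which have fixed positive probability, whereas here a single real value has measure zero. The fix is precisely the continuity of McNaughton functions: instead of hitting the minimizing point exactly, it suffices to land in a product of small positive-measure intervals around it, and continuity guarantees the term value is then within $\eta$ of the extremum. I would therefore make the choice of $\delta$ (hence of the intervals $I_j$) depend on the target accuracy $\eta$, run the independence-and-counting estimate for that fixed $\eta$, and finally note that since this works for every $\eta>0$, the almost sure value is exactly $m$ (respectively $s$) in the sense of Theorem~\ref{zeroonelaw1}. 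No appeal to the extension-axiom machinery is needed; the argument is a direct second-Borel--Cantelli-style computation, just as in the finitely valued Lemma~\ref{al}.
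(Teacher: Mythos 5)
Your proof is correct and follows essentially the same route as the paper's: attain/approximate the extremum, use continuity of the McNaughton function to replace the measure-zero exact-value events of Lemma~\ref{al} with positive-measure interval events $R(i_j)\in I_j$, and run the same independent-tuple estimate $1-(1-p_R(I_1)\cdots p_R(I_k))^{\lfloor n/k\rfloor}\to 1$ over a set $K$ of coordinate-disjoint tuples. (The target interval you first write, $(m-\eta,m)$, should of course be $(m-\eta,m+\eta)$, as your own concluding sentence makes clear.)
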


\begin{proof}
Notice first that $m$ and $s$ exist by compactness of $[0,1]$ and the continuity of $t^\alg{A}$. Let  $R(x)$ be an atomic formula in one variable. We show that for any open interval $V$ of $[0,1]$ containing $m$:
\begin{equation*}
\mu _{n}(\forall x_{1}\ldots \forall x_{k}\,(t(R(x_{1}),\ldots,R(x_{k})))\in V)\ \text{converges to}\ 1.
\end{equation*}

By definition of $m$, there is a tuple $\tuple{a_{j}}_{j}\in |A|^{k}$ such that $t^\alg{A}(a_{1},\ldots,a_{k})\in V$ and  by continuity of $t^\alg{A}$, there are open intervals $V_{j}$ such that $a_{j}\in V_{j}$ and $v_{j}\in V_{j}$ implies $t^\alg{A}(v_{1},\ldots,v_{k})\in V.$

For each $n\geq k$ find $K\subseteq \lbrack n]^{k}$ of power $|K|=\lfloor n/k\rfloor $ such that all tuples in $K$ have all their components distinct, and two different tuples in $K$ are distinct at each coordinate. Then,

$\mu _{n}(\forall x_{1}\ldots\forall x_{k}\,(t(R(x_{1}),\ldots,R(x_{k}))\in V)$

$=\Pr (\inf_{i_{1},\ldots,i_{k}\in \lbrack n]}t^\alg{A}(R(i_{1}),\ldots,R(i_{k}))\in V)$

$=\Pr ($for some $(i_{j})_{j}$ in $[n]^{k}:t^\alg{A}(R(i_{1}),\ldots,R(i_{k}))\in V)$ (since $V$ is an open interval containing $m$)

$\geq \Pr ($for some $(i_{j})_{j}$ in $[n]^{k}:R(i_{j})\in V_{j},$ for $j=1,\ldots,k)$.

$\geq \Pr ($for some $(i_{j})_{j}$ in $K:R(i_{j})\in V_{j},$ for $j=1,\ldots,k)$

$=1-\Pr ($for all $(i_{j})_{j}$ in $K$ is false: $R(i_{j})\in V_{j},$ for $j=1,\ldots,k)$

$=1-\prod_{(i_{1}\ldots i_{k})\in K}[1-\Pr (R(i_{j})\in V_{j}$ for $j=1,\ldots,k)]$

$=1-(1-p_{R}(V_{1})\ldots p_{R}(V_{k}))^{\lfloor n/k\rfloor}$.

For the last two identities, we use that the events $R(i_{j})\in V_{j}$ form a mutually independent set for the coordinates of the $k$-tupes in $K$. Since $p_{R}(V_{i})>0$, the last quantity converges to $1$ when $n$ goes to $\infty$.

A similar argument shows that, for any open set $V$ containing $s$, $$\mu _{n}(\exists x_{1}\ldots \exists x_{k}\,(t(R(x_{1}),\ldots,R(x_{k}))\in V)$$ converges to $1$.
\end{proof}

\begin{Cor}
All rational numbers are almost sure values of $[0,1]$-valued \L ukasiewicz
logic ($\mathcal{L}^{[0,1]_\mathrmL}$).
\end{Cor}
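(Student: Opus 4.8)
The plan is to reduce the statement to Theorem~\ref{t:AlmostSureLuk} by producing, for every rational $r\in[0,1]$, a term in the signature of \MV-algebras whose infimum over $[0,1]$ equals exactly $r$. Write $r=\frac{k}{N}$ with integers $0\le k\le N$. The case $r=0$ is already covered by the term $t(v)=v$, whose infimum over $[0,1]$ is $0$, so I may assume $1\le k\le N$ and concentrate on producing a term with infimum $\frac{k}{N}$.

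The key idea is to reuse the term from Example~\ref{ex:LukN}, but now interpreted in the standard algebra $[0,1]_\mathrmL$ rather than in the finite chain $\mathrmL_{N+1}$. First I would record the standard-algebra meanings of the Chang connectives, namely $x\oplus y=\min(1,x+y)$, $x^{N}=\max(0,Nx-(N-1))$ and $k\cdot x=\min(1,kx)$, and then evaluate $t(v)=v^{N}\oplus\lnot v$ by cases: for $v\le\frac{N-1}{N}$ one has $v^{N}=0$, so $t(v)=1-v$, which decreases to $\frac1N$; for $v\ge\frac{N-1}{N}$ one has $t(v)=(N-1)v-N+2$, which increases from $\frac1N$ to $1$. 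Hence $\inf_{v\in[0,1]}t(v)=\frac1N$, attained at $v=\frac{N-1}{N}$. Setting $t_{r}(v):=k\,(v^{N}\oplus\lnot v)$ and using that $k\cdot x=\min(1,kx)$ is monotone non-decreasing, the infimum of $t_{r}$ over $[0,1]$ is $\min(1,k\cdot\tfrac1N)=\frac{k}{N}=r$, the last equality holding because $k\le N$. An application of Theorem~\ref{t:AlmostSureLuk} then shows that $r$ is an almost sure value of $\mathcal{L}^{[0,1]_\mathrmL}$, and since $r$ was an arbitrary rational of $[0,1]$, this finishes the argument.

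I expect the only genuine obstacle to be checking that replacing the finite chain $\mathrmL_{N+1}$ of Example~\ref{ex:LukN} by the whole interval $[0,1]$ does not push the infimum below $\frac1N$. This is settled by the explicit piecewise-linear computation above: $t$ is a continuous McNaughton function whose global minimum over $[0,1]$ is already attained at the chain point $\frac{N-1}{N}$, so no smaller value appears in the passage to the continuum, and likewise the factor $k\cdot(\,\cdot\,)$ scales the infimum without introducing a lower one. Every remaining step is a direct invocation of Theorem~\ref{t:AlmostSureLuk} together with a routine evaluation of the standard \MV-operations.
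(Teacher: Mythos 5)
Your proof is correct and takes essentially the same route as the paper, whose entire argument is to apply Theorem~\ref{t:AlmostSureLuk} to the terms $t_{k/N}(v)=k(v^{N}\oplus\lnot v)$ of Example~\ref{ex:LukN}. Your explicit piecewise-linear check that the infimum of these McNaughton functions over all of $[0,1]$ (and not merely over the finite chain $\mathrmL_{N+1}$) is still exactly $\frac{k}{N}$ is precisely the detail the paper leaves implicit, so nothing is missing.
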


\begin{proof}
Use the same terms of Example~\ref{Lukas}.
\end{proof}

It is easy to see that Theorem~\ref{t:AlmostSureLuk} generalizes to any bounded lattice chain for which the additional operations are continuous with respect to the order topology.

\begin{Thm}
Let $t(v_{1},\ldots,v_{k})$ be a term for a chain $\alg{A}$. If $t^\alg{A}$ is continuous w.r.t.\ the order topology, then $m=\inf_{a_{1},\ldots,a_{k}\in A}t^\alg{A}(a_{1},\ldots,a_{k})$ and $s=\sup_{a_{1},\ldots,a_{k}\in A}t^\alg{A}(a_{1},\ldots,a_{k})$ are almost sure for $\mathcal{L}^\alg{A}$ whenever they exist.
\end{Thm}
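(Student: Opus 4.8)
The plan is to mimic the proof of Theorem~\ref{t:AlmostSureLuk} almost verbatim, replacing only the two ingredients that were specific to \L ukasiewicz logic: the existence of $m$ and $s$, and the use of \emph{open intervals} as the relevant neighbourhood basis. For a bounded lattice chain $\alg{A}$ equipped with the order topology, the natural subbasic open sets are the rays $\{x : x < a\}$ and $\{x : x > a\}$, and a basis is given by open intervals $(c,d)$ (together with the half-open end pieces $[0,d)$ and $(c,1]$ at the bottom and top of the chain). So throughout I would read ``open interval $V$'' as ``basic open set in the order topology'', and the almost-sure convergence statement I would prove is $\lim_n \mu_n(\forall x_1 \ldots \forall x_k\, t(R(x_1),\ldots,R(x_k)) \in V) = 1$ for every such $V$ containing $m$, and symmetrically for $s$ and $\exists$.

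First I would note that $m$ and $s$ are assumed to exist, so there is nothing to prove about existence (unlike the \L ukasiewicz case, where compactness of $[0,1]$ and continuity gave existence for free). Next, given a basic open $V \ni m$, I use the definition of infimum together with continuity of $t^\alg{A}$: there is a tuple $\tuple{a_j}_j \in A^k$ with $t^\alg{A}(a_1,\ldots,a_k) \in V$, and by continuity in the product order topology there are basic open neighbourhoods $V_j \ni a_j$ such that $v_j \in V_j$ for all $j$ forces $t^\alg{A}(v_1,\ldots,v_k) \in V$. The only genuinely new wrinkle relative to the \L ukasiewicz proof is the inequality step ``$\inf_{i} t^\alg{A}(R(i_1),\ldots,R(i_k)) \in V$ whenever the witnessing tuple lands in $V$'': this used that $V$ is an open interval containing $m$, so that a value below every configuration but landing inside $V$ already pins the infimum into $V$ from above, while $m$ being the infimum pins it from below. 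In a general chain with the order topology this still works because the basic open sets are order-convex, so any value lying in $V$ above $m$ keeps the infimum inside $V$ once $m$ itself is the global infimum; I would spell this out as a short lemma about order-convexity of the subbasic rays.

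From there the computation is word-for-word the one in Theorem~\ref{t:AlmostSureLuk}: for each $n \geq k$ pick $K \subseteq [n]^k$ with $|K| = \lfloor n/k \rfloor$ whose tuples have pairwise-distinct coordinates (and distinct tuples differ in every coordinate), bound $\mu_n(\ldots \in V)$ below by the probability that some tuple in $K$ has $R(i_j) \in V_j$ for all $j$, use independence of the $k\lfloor n/k\rfloor$ atomic events $R(i_j) \in V_j$ across distinct tuples of $K$, and obtain $1 - (1 - p_R(V_1)\cdots p_R(V_k))^{\lfloor n/k\rfloor}$, which tends to $1$ because each $p_R(V_j) > 0$ by the hypothesis that the measure gives positive weight to every non-trivial interval. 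The argument for $s$ and $\exists$ is the order-dual, swapping $\inf$ for $\sup$ and the relevant rays.

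The step I expect to be the main obstacle is not any computation but getting the topological bookkeeping exactly right, namely ensuring that the measures $p_R$ assign positive value to the neighbourhoods $V_j$ produced by continuity. In the \L ukasiewicz setting the basic opens were genuine intervals and positivity on non-trivial intervals was built into the standing assumption on $p_R$; for an abstract chain $\alg{A}$ with the order topology I must assume the analogous positivity condition (positive measure on every non-degenerate order-convex open set) and check that the $V_j$ obtained from continuity are non-degenerate --- which can fail precisely when some $a_j$ is isolated or an endpoint of the chain. The clean way around this is to absorb degenerate cases by allowing singleton neighbourhoods when $a_j$ is isolated and simply requiring $p_R(\{a_j\}) > 0$ there, so that the positivity hypothesis on $p_R$ should be stated as: every non-empty basic open set has positive measure. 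With that convention fixed, the rest is routine and the proof transcribes directly.
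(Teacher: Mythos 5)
Your proposal is correct and follows essentially the same route the paper intends: the paper gives no separate proof for this theorem, stating only that the argument of Theorem~\ref{t:AlmostSureLuk} generalizes, and your transcription --- replacing open intervals of $[0,1]$ by order-convex basic opens of the order topology, and noting that positivity of $p_R$ on the neighbourhoods $V_j$ (including singletons at isolated points) is the hypothesis that must be carried over --- is exactly that generalization, with the bookkeeping made explicit. Your observation about order-convexity justifying the step from ``the infimum lies in $V$'' to ``some tuple lands in $V$'' is the right point to flag and is handled correctly.
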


\begin{Exm}\emph{Consider \L ukasiewicz logic with product. Then, $\frac{1}{2}(\sqrt{5}-1)=\inf (\lnot v\vee (v\cdot v))$ is an almost-sure value. }\end{Exm}

\begin{Exm}\emph{Let $\alg{G}_\uparrow $ be the G-chain consisting of an ascending sequence from $0$ to $1$. Then, it may be checked that G\"{o}del implication is continuous in this case. Therefore, all elements of $\alg{G}_\uparrow$ are almost sure (compare with Example~\ref{ex:ReductG} below).}\end{Exm}

Finally, notice that the analogue of Lemma~\ref{De Morgan} holds for infinite compact sublattices of $[0,1]$ with a continuous negation, and actually gives a proof of the zero-one law in this case.

\begin{Thm}\label{De Morgan2}
Let $\alg{A}=\tuple{A,\wedge ,\vee ,\lnot}$ be an infinite compact sublattice of\/ $[0,1]$ containing $0$ and $1$ with a continuous negation satisfying:

\begin{enumerate}
\item $\lnot (v\wedge w)=\lnot v\vee \lnot w,$ $\lnot (v\vee w)=\lnot
v\wedge \lnot w$

\item $v\leq \lnot \lnot v$

\item $\lnot 1=0$
\end{enumerate}

Then, $\mathcal{L}^\alg{A}$ satisfies the zero-one law and its only almost sure values are $0$, $1$ and

$\varepsilon =\sup_{x\in A}(x\wedge \lnot x),$ $\varepsilon ^{\prime}=\sup_{x\in A}(\lnot x\wedge \lnot \lnot x)$,

$\delta =\inf_{x\in A}(x\vee \lnot x),$ $\delta ^{\prime}=\inf_{x\in A}(\lnot x\vee \lnot \lnot x)$.
\end{Thm}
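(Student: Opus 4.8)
The plan is to transport the proof of Theorem~\ref{De Morgan} to the approximate setting of Section~\ref{s:01-law-Luk}, replacing the exact quantifier elimination under the extension axioms by an approximate one whose error can be driven to $0$. First I would record that the six displayed values are almost sure: since $A$ is a compact chain and $\wedge,\vee,\lnot$ are continuous, the suprema and infima defining $\varepsilon,\varepsilon',\delta,\delta'$ (and $0,1$) are attained, so each is the infimum or supremum of a one-variable term and hence almost sure by the chain version of Theorem~\ref{t:AlmostSureLuk} established above. Next I would check that the purely algebraic laws (4)--(7) from the proof of Theorem~\ref{De Morgan} follow from (1)--(3) verbatim, as their derivations use only the De~Morgan laws, $v\le\lnot\lnot v$, antitonicity of $\lnot$, and $\lnot 1=0$; in particular $E=\{0,\varepsilon,\varepsilon',\delta,\delta',1\}$ is again a subalgebra, and after expanding $\alg{A}$ to $\alg{A}^{+}$ by constants for $E$ every term admits the same disjunctive and conjunctive normal forms, with each variable occurring only as $v_i,\lnot v_i,\lnot\lnot v_i,\,v_i\wedge\lnot v_i$, or $\lnot v_i\wedge\lnot\lnot v_i$.

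The heart of the argument is an approximate quantifier elimination under the extension theory $Ext$ of Section~\ref{s:01-law-Luk}, understood semantically: the witness-existence condition of Lemma~\ref{extension} and the probabilistic lemma preceding it remain available, since both rest only on continuity of the operations and on each $p_R$ giving positive mass to every subinterval. For a conjunct $C_s(\overline{x},x_{k+1})$ in disjunctive normal form I would form $C_s^{+}(\overline{x})$ by substituting each configuration of an atom $R_r(\overline{x},x_{k+1})$ by its largest possible value, namely $1$ for $R_r,\lnot R_r,\lnot\lnot R_r$ alone, $\varepsilon$ for $R_r\wedge\lnot R_r$, and $\varepsilon'$ for $\lnot R_r\wedge\lnot\lnot R_r$; all of these lie in $E$ and, by compactness and continuity, are genuinely attained. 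Then $\sup_i||C_s(\overline{m},i)||^{\mathfrak{M}}\le||C_s^{+}(\overline{m})||^{\mathfrak{M}}$ holds in every model, while for $N$ large the extension property produces a witness $i_0$ at which each $R_r(\overline{m},i_0)$ lies within $1/N$ of the optimal value, so uniform continuity of the operations gives $\sup_i||C_s(\overline{m},i)||^{\mathfrak{M}}\ge||C_s^{+}(\overline{m})||^{\mathfrak{M}}-\eta_N$ with $\eta_N\to 0$. Disjoining over $s$, and dualizing via the conjunctive normal form with the lower substitutions $0,\delta,\delta'$ for the universal quantifier, yields $\exists x_{k+1}\,\varphi\equiv^{\eta_N}_{Ext}\bigvee_s C_s^{+}$ and its $\forall$-dual, an $Ext$-equivalence valid up to $\eta_N$.

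Iterating this elimination along the structure of a sentence $\varphi$, and propagating the error through $\wedge,\vee$ (non-expansive), through $\lnot$ (via its modulus of continuity), and through the quantifiers (since $|\sup f-\sup g|\le\sup|f-g|$), one obtains for every $\varepsilon>0$ an $N$ and a constant $e\in E$ with $\bigl|\,||\varphi||^{\mathfrak{M}}-e\,\bigr|\le\varepsilon$ in every model $\mathfrak{M}$ satisfying the extension axioms up to the quantifier rank of $\varphi$. Combined with the probabilistic lemma, which drives the $\mu_n$-measure of those extension axioms to $1$, this gives $\lim_n\mu_n(|\varphi-e|\le\varepsilon)=1$; as $E$ is finite, letting $\varepsilon\to 0$ forces the almost sure value of $\varphi$ to be this single $e\in E$, establishing at once the zero-one law and that the only almost sure values are $0,\varepsilon,\varepsilon',\delta,\delta',1$. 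The main obstacle is precisely the error bookkeeping of this last step: unlike the finite case, where witnesses realize the extremal values exactly, here one must verify that the per-step error $\eta_N$ coming from the interval-valued extension axioms, amplified by the modulus of continuity of $\lnot$, still accumulates to a quantity tending to $0$ for each fixed $\varphi$ as $N\to\infty$.
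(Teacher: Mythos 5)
Your proposal is correct and follows essentially the same route as the paper's own (much terser) argument: use compactness of $A$ and continuity of the operations so that the extremal values of the five configurations are attained, invoke the interval-style extension axioms of the \L ukasiewicz section to realize them approximately with asymptotic probability $1$, and rerun the quantifier elimination of the finite De Morgan case. The only difference is that you make explicit the $\eta_N\to 0$ error bookkeeping that the paper leaves implicit, which is a welcome addition rather than a deviation.
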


\begin{proof}
By compactness and continuity of the operations, this infima and suprema are attained, for example $\varepsilon =(a\wedge \lnot a)$ for some $a \in A$. Thus, only finitely many extension axioms
are needed to ensure that the suprema of the various configurations $R$, $\lnot R$, $\lnot \lnot R$, $R\wedge \lnot R,$ $\lnot R\wedge \lnot \lnot R$ appearing in the proof of Lemma~\ref{De Morgan} are realized, and the quantifier elimination may be achieved with asymptotic probability $1$.
\end{proof}

\begin{Exm} \emph{The logic given by the $\{\land, \lor, \neg\}$-reduct of the \L ukasiewicz algebra $([0,1]\cap \mathbb{Q})_\mathrmL$ has almost sure values $\{0,\frac{1}{2},1\}.$}
\end{Exm}

\begin{Exm}\label{ex:ReductG} \emph{The logic given by the $\{\wedge, \vee, \neg\}$-reduct of any closed subalgebra of the standard \G-chain over $[0,1]$ in which $0$ is
isolated has a zero-one law and its almost sure values are $\{0,\delta ,1\}$ where $\delta$ is the minimum positive element since $\wedge, \vee$ are continuous and under isolation of $0$. G\"{o}del negation becomes continuous.}
\end{Exm}

\section{Final remarks}\label{s:Conclusion}

The translation we have used in the case of finite algebras has appeared in~\cite{bad, bad2} and permits to transfer straightforwardly from classical to finitely valued logics most relevant model-theoretic facts and concepts: compactness, ultraproducts, elementary embeddings, type omission, etc.

Gr\"{a}del et al~\cite{gra} consider certain semirings $\alg{A}=\tuple{A,+,\cdot ,0,1}$ as algebras of truth values in which  $\vee $ and $\wedge$ are interpreted as $+$ and $\cdot$, and the quantifiers $\exists$ and $\forall $ are interpreted as iterated $+$ and $\cdot$, respectively. Negation is allowed at the atomic level only, and no further operations (connectives) are allowed. In this setting they obtain zero-one laws for finite distributive lattices and ``absorptive'' semirings such as $\tuple{\L
_{M+1},\vee ,\odot ,0,1}$. Our results in the present paper generalize those on lattice semirings because we do not require distributivity and we have arbitrary additional operations. Moreover, the treatment of negation in~\cite{gra} can be expressed in terms of Oberschelp conditions.

Notice that the zero-one law fails for the simplest non-lattice semirings, namely the two-element field $\mathbb{F}_{2}=\tuple{\{0,1\},+,\cdot,0,1}$, as witnessed by the sentence $\exists x(Px\vee \lnot Px)$ whose value in
universes of power $n$ is $\Sigma_{x\in \lbrack n]}1$, which
oscillates between $0$ and $1$ with the parity of $n$.

It is worth noting that, in the context of continuous model theory, similar laws to the one we have provided here for infinitely valued \L ukasiewicz predicate logic have recently attracted attention in~\cite{gold} under certain restrictions for identity. In fact, in the present paper we have generalized these results to vocabularies containing other predicates than just identity.

We conclude with some questions that merit further investigation:

\begin{enumerate}
\item Does $\mathcal{L}^{[0,1]_\mathrmL}$ have almost sure irrational values?
\item Does full \L ukasiewicz logic over $[0,1]$ with product and its (discontinuous) residuated implication have a zero-one law?
\item Does full G\"{o}del logic over $[0,1]$ have a zero-one law?
\item The set $A_{as}$ of almost sure values of a logic $\mathcal{L}^\alg{A}$ is an invariant of $\alg{A}$, in fact, a subalgebra. Is there a characterization of $A_{as}$ in purely algebraic terms?
\end{enumerate}

\section*{Acknowledgments}
Badia was supported by the Australian Research Council grant DE220100544. Badia and Noguera were also  supported by the European Union's Marie Sklodowska--Curie grant no.\ 101007627 (MOSAIC project).

\end{document}